\newtheorem{theorem}{Theorem}[section]
\newtheorem{lemma}[theorem]{Lemma}
\newtheorem{corollary}[theorem]{Corollary}
\newtheorem{proposition}[theorem]{Proposition}
\theoremstyle{definition}
\newtheorem{example}[theorem]{Example}
\theoremstyle{remark}
\newtheorem{remark}[theorem]{Remark}
\newtheorem{conjecture}[theorem]{Conjecture}
\newcommand{\sshf}[1]{\mathcal{O}_{#1}}
\newcommand{\shf}[1]{\mathscr{#1}}
\newcommand{\prj}[1]{\mathbb{P}^{#1}}
\newcommand{\iso}{\simeq}
\newcommand{\ses}[3]{0\rightarrow#1\rightarrow#2\rightarrow#3\rightarrow{0}}
\newcommand{\paren}[1]{\left(#1\right)}
\numberwithin{equation}{section}
\begin{document}
\allowdisplaybreaks
\title{On the projective normality of double coverings over a rational surface}

\author{Biswajit Rajaguru}
\author{Lei Song}

\address{Department of Mathematics, University of Kansas, Lawrence, KS 66045}
\email{biswajit.rajaguru@ku.edu}
\address{Department of Mathematics, University of Kansas, Lawrence, KS 66045}
\email{lsong@ku.edu}


\dedicatory{}

\keywords{projective normality, double covering, adjoint divisor}

\begin{abstract}
We study the projective normality of a minimal surface $X$ which is a ramified double covering over a rational surface $S$ with $\dim|-K_S|\ge 1$. In particular Horikawa surfaces, the minimal surfaces of general type with $K^2_X=2p_g(X)-4$, are of this type, up to resolution of singularities. Let $\pi$ be the covering map from $X$ to $S$. We show that the $\mathbb{Z}_2$-invariant adjoint divisors $K_X+r\pi^*A$ are normally generated, where the integer $r\ge 3$ and $A$ is an ample divisor on $S$.
\end{abstract}

\maketitle

\section{Introduction}
Let $X$ be a smooth complex projective variety and $L$ be a very ample line bundle on $X$, inducing a closed embedding
\begin{equation*}
    \varphi: X\hookrightarrow \mathbb{P}(V),
\end{equation*}
where $V=H^0(X, L)$. A natural question concerning such an embedding is that for which $L$, is the natural map
\begin{equation*}
    H^0(\mathbb{P}(V), \sshf{\mathbb{P}(V)}(k))\rightarrow H^0(X, L^{\otimes k})
\end{equation*}
surjective for every positive integer $k$? Put it another way, for which $L$, can every member $D\in |L^{\otimes k}|$ be cut out from $X$ by a degree $k$ hypersurface in $\mathbb{P}(V)$?

If the answer to the above question is positive, then $X$ is embedded by the complete linear system $|L|$ as a projectively normal variety and $L$ is said to be \textit{normally generated}.

Normal generation is equivalent to so-called Property $(N_0)$. More generally, one can define Property $(N_p)$ for any integer $p\ge 0$. These properties prescribe the shape of a minimal graded free resolution of $R(L):=\oplus_{k\ge 0} H^0(X, L^{\otimes k})$ as an $S:=\text{Sym}H^0(X, L)$-module. We refer the reader to \cite{Lazarsfeld04} for an account of this subject.

A conjecture attributed to S. Mukai says, using the additive language of divisors, that for a smooth projective variety $X$ of dimension $n$, divisors of the form $K_X+(n+2+p)A+P$ shall satisfy Property $(N_p)$, where $K_X$ is the canonical divisor of $X$, $A$ is an ample divisor on $X$ and $P$ is a nef divisor. This was confirmed, in a stronger form, in the case that $A$ is very ample by \cite{EinLazarsfeld93}; and in the end of that paper, among other questions, the following was raised.

\begin{conjecture}\label{conjecture}
If $X$ is a smooth projective surface, and $A$ is an ample divisor on $X$, then $K_X+rA$ is normally generated for every integer $r\ge 4$.
\end{conjecture}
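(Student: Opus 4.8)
The plan is to reduce normal generation of $L := K_X + rA$ to a family of cohomological vanishings and then to establish those by restricting to curves. First I would dispose of positivity: for $r \ge 4$ the divisor $rA$ is nef with $(rA)^2 \ge 16$ and $rA \cdot C \ge 4$ for every curve $C$, so Reider's theorem shows that $L = K_X + rA$ is very ample, the finitely many Reider exceptional configurations being excluded by the inequality $rA \cdot C \ge 4 > 2$. In particular $L$ is globally generated and realizes the embedding in the statement.

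With $L$ globally generated, normal generation is equivalent to the surjectivity of the multiplication maps $H^0(X,L) \otimes H^0(X,L^{\otimes k}) \to H^0(X, L^{\otimes(k+1)})$ for all $k \ge 1$. I would encode this through the kernel bundle $M_L$ defined by the evaluation sequence $0 \to M_L \to H^0(X,L) \otimes \mathcal{O}_X \to L \to 0$. Tensoring by $L^{\otimes k}$ and taking the long exact sequence, the cokernel of the $k$-th multiplication map injects into $H^1(X, M_L \otimes L^{\otimes k})$, so it suffices to prove
\[
H^1\!\left(X,\, M_L \otimes L^{\otimes k}\right) = 0 \qquad \text{for all } k \ge 1.
\]

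To establish this vanishing I would run a double induction, subtracting one base-point-free divisor at a time. Let $D$ be a smooth irreducible member of a base-point-free subsystem of $|mA|$ and consider
\[
0 \to M_L \otimes L^{\otimes k}(-D) \to M_L \otimes L^{\otimes k} \to (M_L \otimes L^{\otimes k})|_D \to 0.
\]
For the restricted term, once $H^0(X,L) \to H^0(D, L|_D)$ is surjective (which follows from $H^1(X, L - D) = 0$ by Kawamata--Viehweg, since $L - D = K_X + (rA - D)$ is adjoint to a big and nef divisor), the bundle $M_L|_D$ splits as a trivial bundle plus the curve kernel bundle $M_{L|_D}$; the needed $H^1\big((M_L \otimes L^{\otimes k})|_D\big) = 0$ then follows from the normal generation of $L|_D$ on $D$, i.e.\ from Mumford's theorem once $\deg L|_D \ge 2g(D) + 1$. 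For the off-curve term one recurses on $M_L \otimes L^{\otimes k}(-D)$, lowering the twist until the associated line bundles have vanishing $H^1$ directly by Kawamata--Viehweg.

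The genuine difficulty, and the reason this remains beyond the very ample case of \cite{EinLazarsfeld93}, is that $A$ is assumed only \emph{ample}: $|A|$ need not contain a smooth irreducible curve, forcing one to work inside $|mA|$ with $m \ge 2$. Passing to a multiple degrades every numerical inequality above at once---the degree of $L|_D$ relative to $2g(D)+1$, the bigness of $rA - D$, and the surjectivity $H^0(X,L) \twoheadrightarrow H^0(D, L|_D)$---while Conjecture \ref{conjecture} is expected to be sharp at $r = 4$, leaving essentially no numerical slack. Controlling these losses uniformly, together with the irregular case $h^1(\mathcal{O}_X) > 0$ where the Kawamata--Viehweg inputs are most delicate, is where the substance of any complete proof must lie; this is precisely the gap between the settled very ample case and the ample case asserted here.
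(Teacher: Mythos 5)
The statement you were asked to prove is labeled a \emph{conjecture} in the paper, and it remains one: the paper does not prove it, and neither does your proposal. What the paper actually establishes is a special case (Theorem \ref{main result} and its corollary): for minimal surfaces $X$ arising as ramified double covers $\pi\colon X \to S$ of a rational surface $S$ with $\dim|-K_S|\ge 1$, the divisors $K_X + r\pi^*A$ with $r \ge 3$ and $A$ ample on $S$ are normally generated. The paper's method is tailored entirely to the double-cover geometry: it pushes everything down to $S$ via $\pi_*\sshf{X} \iso \sshf{S}\oplus\sshf{S}(-B)$, reduces the surjectivity of $\mathrm{Sym}^r H^0 \to H^0$ to two multiplication maps on the rational surface $S$, and then handles the hard one, $H^0(K_S+B+L)\otimes H^0(K_S+L)\to H^0(2K_S+B+2L)$, by restricting to a smooth member of $|K_S+L|$ (Green's $H^0$-lemma plus Clifford's theorem, Lemma \ref{surjectiveness of multiplication map2}) and to the fixed part $F$ of $|-K_S|$ or a member of $|-K_S|$ (an induction over the possibly non-reduced curve $F$, Lemma \ref{surjectiveness of multiplication map}). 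None of this machinery is available for an arbitrary smooth projective surface, which is exactly why the general statement is posed as a conjecture.

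Your proposal, by contrast, is the standard kernel-bundle strategy for the general conjecture, and to your credit you locate precisely where it breaks: since $A$ is only ample, $|A|$ (and possibly $|mA|$ for small $m$) need not contain a smooth irreducible member, and passing to a multiple $|mA|$ with $m\ge 2$ erodes all the numerical margins at once --- the vanishing $H^1(L-D)=0$, the bound $\deg L|_D \ge 2g(D)+1$ needed for Mumford's theorem on $D$, and the induction on twists --- exactly in the regime where the conjecture is expected to be sharp at $r=4$. Your final paragraph concedes that controlling these losses ``is where the substance of any complete proof must lie.'' That concession is accurate, but it means your text is a strategy outline whose central step is missing, not a proof. (The preliminary steps you do carry out are correct and standard: very ampleness via Reider for $r\ge 4$; the reduction of normal generation to $H^1(M_L\otimes L^{\otimes k})=0$; and the extension $0\to H^0(L-D)\otimes\sshf{D}\to M_L|_D\to M_{L|_D}\to 0$ when $H^0(X,L)\to H^0(D,L|_D)$ surjects --- though note this is an extension, not automatically a splitting, which suffices for the $H^1$ argument.) So the verdict is a genuine gap, one that neither you nor the paper closes; the paper should be read as supplying evidence for the conjecture in the double-cover case, not a proof of it.
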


It is well known that if $r\ge 4$, $K_X+rA$ is very ample by \cite{Reider88}. Concerning normal generation, Conjecture \ref{conjecture} has been known to be true in several important cases: $K3$ surfaces by \cite{Mayer72}\cite{Saint74}, Abelian surfaces by \cite{Koizumi76}, elliptic ruled surfaces by \cite{Homma80}, \cite{Homma82}, and anticanonical rational surfaces by \cite{GalPurna01}. For minimal surfaces of general type, partial results are obtained, for instance, if $K^2_X\ge 2$ and $A-K_X$ is big and nef, then $K_X+rA$ is normally generated for $r\ge 2$ by \cite{Purna05}.

In this note, our main result is

\begin{theorem}\label{main result}
Let $S$ be a rational surface with $\dim|-K_S|\ge 1$. Let $\pi: X\rightarrow S$ be a ramified double covering of $S$ by a minimal surface $X$ (possibly singular). Let $L$ be a divisor on $S$ with the property that $K_S+L$ is nef and $L\cdot C\ge 3$ for any curve $C$. Then $K_X+\pi^*L$ is base point free and the natural map
\begin{equation}\label{multiplicaion map 2}
    \text{Sym}^rH^0(K_X+\pi^*L)\rightarrow  H^0(r(K_X+\pi^*L))
\end{equation}
surjects for every $r\ge 1$.
\end{theorem}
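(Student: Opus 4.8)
The plan is to descend the entire question to the rational surface $S$ via the double-cover structure. Writing the branch divisor as $2B$, one has $K_X=\pi^*(K_S+B)$ and $\pi_*\mathcal{O}_X\iso\mathcal{O}_S\oplus\mathcal{O}_S(-B)$, so that $N:=K_X+\pi^*L=\pi^*M$ with $M:=K_S+B+L$. By the projection formula each $H^0(X,jN)$ splits into its $\iota$-invariant and anti-invariant parts $H^0(S,jM)\oplus H^0(S,jM-B)$, and ring multiplication respects this grading, the product of two anti-invariant sections landing in the invariant part after multiplication by the section $f\in H^0(S,2B)$ cutting out the branch locus. Hence surjectivity of $\mathrm{Sym}^rH^0(N)\to H^0(rN)$ for all $r$ is equivalent to two statements on $S$: (I) the section ring $\bigoplus_j H^0(jM)$ is generated in degree one, and (II) the module $\bigoplus_j H^0(jM-B)$ is generated in degree one over that ring. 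I record at the outset that minimality of $X$ forces $K_X=\pi^*(K_S+B)$, hence $K_S+B$, to be nef, so that $M=(K_S+B)+L$ is ample.

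First I would settle base point freeness. Since the fibre of $\pi$ over a branch point is a single ramification point at which the anti-invariant sections vanish, base point freeness of $N=\pi^*M$ on $X$ is equivalent to that of $M$ on $S$; the latter follows from Reider's theorem applied to $M=K_S+(B+L)$, the requisite positivity of $B+L$ being guaranteed by the hypotheses. Next I would assemble the vanishing that drives everything else: for $j\ge 1$ the divisors $jM-K_S=(j-1)M+(B+L)$ and $(jM-B)-K_S=(j-1)M+L$ are nef and big, so Kawamata--Viehweg gives $H^1(S,jM)=0$ and $H^1(S,jM-B)=0$. These provide the restriction (lifting) surjectivities onto a general smooth member $C\in|M|$ that are needed below.

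For the invariant part (I) I would use the standard device of restricting to such a $C$: given the liftings above, surjectivity of $H^0(M)\otimes H^0(jM)\to H^0((j+1)M)$ reduces to that of the analogous multiplication on the curve $C$. Here both factors are non-special in the strongest sense, because $\deg(M|_C)=M^2\ge 2g(C)+1$ is equivalent to $M\cdot(-K_S)\ge 3$, and $M\cdot(-K_S)=(K_S+B)\cdot(-K_S)+L\cdot(-K_S)\ge 0+3$, the first term being non-negative since $K_S+B$ is nef and $-K_S$ effective, and the second being $\ge 3$ upon applying the hypothesis $L\cdot C'\ge 3$ to an anticanonical curve (which exists as $\dim|-K_S|\ge 1$). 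Mumford's theorem on curves then yields the curve-level surjectivity for every $j$, establishing (I). This is exactly where the two numerical hypotheses enter.

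The genuinely delicate point, and what I expect to be the main obstacle, is the anti-invariant part (II), and already its first instance, the map $H^0(M)\otimes H^0(M-B)\to H^0(2M-B)$ sitting inside $\mathrm{Sym}^2H^0(N)\to H^0(2N)$. On a curve $C\in|M|$ the relevant restriction $(M-B)|_C=(K_S+L)|_C$ has degree $(M-B)\cdot M\le 2g(C)-2$, since $M-B=K_S+L$ is only nef; it is therefore a \emph{special} bundle, squarely outside the range covered by the crude degree criterion used for (I). Equivalently, a naive cohomological attack stalls because the obstruction lives in $H^2(X,\mathcal{O}_X)\iso H^0(X,K_X)^\vee$, which does not vanish once $p_g(X)>0$, as for the Horikawa surfaces at hand. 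Overcoming this requires a sharp curve-multiplication input---a base-point-free pencil trick or a Green--Lazarsfeld type statement---fed by the same hypotheses (for instance by restricting instead to an anticanonical curve, of arithmetic genus at most one, where degree $\ge 2$ already suffices), together with essential use of the branch section $f$ and the resulting double-cover structure. Once (II) is secured in low degree, the high-degree multiplications ($j\gg 0$) follow routinely from the Kawamata--Viehweg vanishing above, completing the proof.
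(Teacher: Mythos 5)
Your reduction to $S$ is the same as the paper's: using $\pi_*\sshf{X}\iso\sshf{S}\oplus\sshf{S}(-B)$ and the $\mathbb{Z}_2$-grading, everything descends to the ring maps $H^0(jM)\otimes H^0(M)\to H^0((j+1)M)$ and the module maps $H^0(jM)\otimes H^0(M-B)\to H^0((j+1)M-B)$, where $M=K_S+B+L$ and $M-B=K_S+L$; and you are right that vanishing theorems reduce everything to the degree-one instances, so the whole theorem rests on the single map $H^0(K_S+B+L)\otimes H^0(K_S+L)\to H^0(2K_S+B+2L)$. Your treatment of the ring part (I) is correct in substance (the paper instead quotes the Gallego--Purnaprajna criterion from \cite{GalPurna01}, but the underlying computation is the same and is driven by $M\cdot(-K_S)\ge 3$). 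The genuine gap is that for the module part (II) --- which you yourself identify as the main obstacle --- you prove nothing: you correctly diagnose that $(K_S+L)|_C$ is special on $C\in|M|$, and then only list candidate techniques (``base-point-free pencil trick or a Green--Lazarsfeld type statement'', ``restricting instead to an anticanonical curve'') without executing any of them. This step is not routine; it is the entire technical content of the paper's Section 5.

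For comparison, what the paper actually does for this map is a three-case analysis governed by the structure of $|-K_S|$, run through a diagram with an auxiliary curve $C_0$. When the relevant $H^1$ vanishes --- namely $H^1(B)=0$ if $-K_S$ has no fixed part and $K^2_S>0$, or $H^1(B-F)=0$ (Corollary \ref{vanishing of H^1}) if $|-K_S|$ has fixed part $F$ --- the specialty of $(K_S+L)|_\Delta$ on a smooth $\Delta\in|K_S+L|$ is overcome by Green's $H^0$-lemma combined with a Clifford-theorem estimate (Lemma \ref{surjectiveness of multiplication map2}), not by a pencil trick. In the fixed-part case one must in addition prove surjectivity of a multiplication map over $C_0=F$ itself, and since $F$ is in general \emph{non-reduced}, this requires the dedicated induction of Lemma \ref{surjectiveness of multiplication map}, resting on $h^1(\sshf{F})=0$ (Proposition \ref{H^1 of structure sheaf}); your hint about genus $\le 1$ anticanonical curves does not even parse here. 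Finally, when $K^2_S=0$, $-K_S$ has no fixed part and $H^1(B)\neq 0$, Hodge index forces $B=m(-K_S)$ and the paper inducts on $m$, restricting to a smooth anticanonical curve with elliptic components --- this is the only case your parenthetical hint gestures at. Two smaller but real errors in what you did write out: your appeals to Reider's theorem (for base point freeness of $M$) and to Kawamata--Viehweg (for $H^1(jM)=0$) both presuppose that $B+L$, respectively $(j-1)M+B+L$, is nef, which is not known --- only $K_S+B$ is nef, and $B$ can meet fixed components of $|-K_S|$ negatively. The paper avoids this by using Harbourne's results on anticanonical rational surfaces \cite{Harbourne97}, as in Lemma \ref{positivity of K+B+L}, which need only $M$ nef and $M\cdot(-K_S)\ge 2$.
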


Typical examples for $X$ include Horikawa surfaces (see Section 2.1) and the $K3$ surfaces obtained by taking double cover of $\prj{2}$ branched along a smooth sextic. We remark that the condition that $K_S+L$ is nef and $L\cdot C\ge 3$ for any curve $C$ amounts to that $L^2\ge 7$ and $L\cdot C\ge 3$ for any curve $C$, see Proposition \ref{base point freeness of K+L}. Thus the following is an immediate corollary of Theorem \ref{main result}, which further presents some evidence for Conjecture \ref{conjecture}.

\begin{corollary}
Let $S$ be a rational surface with $\dim|-K_S|\ge 1$. Let $\pi: X\rightarrow S$ be a ramified double covering of $S$ by a minimal smooth surface $X$. Then for every $r\ge 3$ and ample divisor $L$, $K_X+r\pi^*L$ is very ample and $K_X+r\pi^*L$ is normally generated.\qed
\end{corollary}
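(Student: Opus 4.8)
The plan is to transport the problem from $X$ to the rational surface $S$ by means of the covering structure, and then reduce to surjectivity of multiplication maps on $S$ that can be handled by vanishing theorems and the classical theory of linear series on curves. Write $\pi_*\mathcal{O}_X=\mathcal{O}_S\oplus\mathcal{O}_S(-D)$, where $2D$ is linearly equivalent to the branch divisor, so that $K_X=\pi^*(K_S+D)$ and hence
\[
\mathcal{L}:=K_X+\pi^*L=\pi^*M,\qquad M:=K_S+L+D.
\]
Since $X$ is minimal, $K_X$ is nef, and as $\pi$ is finite this forces $K_S+D$ to be nef; together with $L\cdot C\ge 3$ for every curve (so $L$ is ample by Nakai--Moishezon, as $L^2\ge 7$), it follows that $M=(K_S+D)+L$ is ample. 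For base point freeness I would argue on $X$ via Reider \cite{Reider88}: $\pi^*L$ is nef and big with $(\pi^*L)^2=2L^2\ge 14$, and for every curve $E\subset X$ one has $\pi^*L\cdot E=L\cdot\pi_*E\ge 3$ because $\pi$ is finite; thus no Reider exceptional configuration occurs and $K_X+\pi^*L$ is base point free (Proposition \ref{base point freeness of K+L} reduces the numerics, and the singular case is treated on a resolution).

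Normal generation, i.e.\ surjectivity of $\mathrm{Sym}^rH^0(\mathcal{L})\to H^0(r\mathcal{L})$ for all $r$, is equivalent to surjectivity of $H^0(\mathcal{L})\otimes H^0(r\mathcal{L})\to H^0((r+1)\mathcal{L})$ for all $r\ge 1$. By the projection formula $H^0(r\mathcal{L})=H^0(S,rM)\oplus w\,H^0(S,rM-D)$, where $w$ is the tautological anti-invariant section with $w^2=\pi^*s_B$ and $s_B\in H^0(S,2D)$ cuts out the branch locus. These maps are $\mathbb{Z}_2$-equivariant, and writing a degree one element as $\pi^*f+w\,\pi^*g$ one checks that the invariant part $H^0(S,(r+1)M)$ of the target is hit once
\[
(\star)\qquad H^0(S,M)\otimes H^0(S,rM)\to H^0(S,(r+1)M)
\]
is surjective, while the anti-invariant part $H^0(S,(r+1)M-D)$ is hit once
\[
(\star\star)\qquad H^0(S,M)\otimes H^0(S,rM-D)\to H^0(S,(r+1)M-D)
\]
is surjective, the term $w^2\pi^*(gQ)=\pi^*(s_B\,gQ)$ only enlarging the invariant image. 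Everything thus reduces to the two families $(\star)$ and $(\star\star)$ on $S$.

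To prove these I would use that $M$ is base point free and restrict to a general smooth curve $C\in|M|$ (Bertini). For a line bundle $G$, surjectivity of $H^0(M)\otimes H^0(G)\to H^0(M\otimes G)$ follows from the standard lifting argument once $H^1(S,G-M)=H^1(S,G)=H^1(S,\mathcal{O}_S)=0$ (the last automatic since $S$ is rational) and the restricted multiplication $H^0(M|_C)\otimes H^0(G|_C)\to H^0((M\otimes G)|_C)$ is surjective. The key cohomology vanishings I would obtain upstairs, exploiting minimality: since $r\mathcal{L}=K_X+\pi^*((r-1)M+L)$ with $(r-1)M+L$ ample, Kawamata--Viehweg on $X$ gives $H^1(X,r\mathcal{L})=0$, hence $H^1(S,rM)=H^1(S,rM-D)=0$ for all $r\ge 1$ simultaneously. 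On $C$, of genus $g$ with $2g-2=(K_S+M)\cdot M$, the decisive bound is $\deg(M|_C)=M^2\ge 2g+1$, equivalent to $-K_S\cdot M\ge 3$; this follows from the hypotheses, for a member $C'\in|{-K_S}|$ exists ($\dim|{-K_S}|\ge 1$) and
\[
-K_S\cdot M=M\cdot C'=(K_S+D)\cdot C'+L\cdot C'\ge 0+3=3,
\]
using that $K_S+D$ is nef and $L\cdot C'\ge 3$. With both restricted factors of degree $\ge 2g$ (one being $\ge 2g+1$), the curve multiplication maps are surjective by Castelnuovo--Mumford and the base-point-free pencil trick.

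The reduction to $S$ is formal; the real content, and the main obstacle, is the surjectivity of $(\star)$ and $(\star\star)$ under the tight hypothesis $L\cdot C\ge 3$, the half-branch divisor $D$ carrying no positivity of its own. What rescues the argument is that minimality of $X$ upgrades $D$ to the nef class $K_S+D$, making $M$ ample and feeding Kawamata--Viehweg on $X$, while $\dim|{-K_S}|\ge 1$ converts $L\cdot C\ge 3$ into the exact bound $-K_S\cdot M\ge 3$. I expect the genuinely delicate case to be $r=1$ of the anti-invariant map $(\star\star)$, namely $H^0(M)\otimes H^0(K_S+L)\to H^0(2M-D)$ (the quadratic step of projective normality): here the curve degree bound degenerates to $-(K_S+D)\cdot M\ge 2$, which is false, and the required vanishing $H^1(S,-D)\cong H^1(S,K_S+D)$ equals the irregularity $q(X)$ and so is not furnished by Kawamata--Viehweg. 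This step will demand a bespoke analysis—establishing $q(X)=0$ and treating degree $2$ directly—where I anticipate $\dim|{-K_S}|\ge 1$ (and, in the singular case, passage to a resolution) to be used most crucially.
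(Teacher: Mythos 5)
Your reduction is sound as far as it goes, and it is in fact the same reduction the paper makes: push forward by $\pi$, split $H^0(r\mathcal{L})$ into invariant and anti-invariant parts, and reduce everything to the two families of multiplication maps $(\star)$ and $(\star\star)$ on $S$ (in the paper's notation, with your $D$ equal to the paper's $B$, these are $f_1$ and $f_2$; your Kawamata--Viehweg argument for $r\ge 2$ is the paper's Castelnuovo--Mumford regularity step in different clothing). Your treatment of $(\star)$ and the bound $-K_S\cdot M\ge 3$ is also fine. But the statement is not actually proved: the quadratic anti-invariant map $H^0(K_S+B+L)\otimes H^0(K_S+L)\to H^0(2K_S+B+2L)$ is exactly where all the content of the theorem lies, and your proposal openly defers it to ``a bespoke analysis.'' That single map is the subject of the paper's entire Section 5 case analysis (Cases I--III), resting on Proposition \ref{H^1 of structure sheaf}, Corollary \ref{vanishing of H^1}, and Lemmas \ref{surjectiveness of multiplication map} and \ref{surjectiveness of multiplication map2}. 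So there is a genuine gap, not a cosmetic one.

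Worse, the repair you sketch --- establishing $q(X)=0$ so that $H^1(S,-B)=0$ and the lifting of Lemma \ref{GP Lemma} through a curve $C\in|M|$ applies --- cannot work, because $q(X)=0$ is false in the generality of the corollary. Take $S$ a rational elliptic surface ($K_S^2=0$, $|-K_S|$ the elliptic pencil with fiber class $F$) and let $X$ be the double cover branched along $2m$ distinct smooth fibers, $m\ge 2$, so that $B=mF=m(-K_S)$. Then $X$ is smooth and minimal ($K_X=\pi^*((m-1)F)$ is nef) and $\dim|-K_S|\ge 1$, yet $q(X)=h^1(S,-B)=h^1(S,(m-1)F)=m-1>0$; indeed $X$ fibers over the genus-$(m-1)$ double cover of $\mathbb{P}^1$. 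This is precisely Case III of the paper's proof ($B=m(-K_S)$, $H^1(B)\neq 0$), and the paper gets around it by abandoning the restriction to a curve in $|M|$ altogether: it restricts instead to a smooth anticanonical curve $C_0\in|-K_S|$ and proves surjectivity by induction on $k$ for $B_k=k(-K_S)$, using only that line bundles of degree $\ge 3=2g+1$ on the elliptic components behave well; likewise, when $|-K_S|$ has a (possibly non-reduced) fixed part $F$, it restricts to $C_0=F$ and runs the induction of Lemma \ref{surjectiveness of multiplication map}. This change of restriction curve --- from a member of $|K_S+B+L|$ to curves supplied by $|-K_S|$ --- is the missing idea, and without it your outline cannot be completed.
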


The hypothesis that $r\ge 3$ in the corollary is optimal. In fact, consider a $K3$ surface $X$ which admits an irreducible curve $\Gamma$ with the arithmetic genus $p_a(\Gamma)=2$. The morphism induced by $|2\Gamma|$ factors as a degree two map $\pi$ onto $\prj{2}$ and the Veronese embedding of $\prj{2}$ into $\prj{5}$. Then the ample divisor $2\Gamma=\pi^*\sshf{\prj{2}}(2)$ is not very ample, and therefore cannot be normally generated (cf.~\cite{Saint74}).

The idea of the proof of Theorem \ref{main result} is as follows. By the projection formula, the surjectivity of (\ref{multiplicaion map 2}) can be reduced to the surjectivity of two multiplication maps on $S$, among which, the difficult one is to show
 \begin{equation*}
    H^0(K_S+B+L)\otimes H^0(K_S+L)\rightarrow H^0(2K_S+B+2L)
 \end{equation*}
surjects, where $B$ is the divisor class such that the branch locus of $\pi$ is a member of $|2B|$. Though $K_S+B$ is nef by the minimality of $X$, the divisor $B$ in general is not nef. Via an appropriate commutative diagram, the surjectivity of the above map can be reduced to the surjectivity of multiplication maps over two curves on $S$. One curve is a member of the linear system $|K_S+L|$, and the other is either the fixed part of $|-K_S|$ (in case that the fixed part is nonempty), or a member of $|-K_S|$. The fixed part of $|-K_S|$ is in general non-reduced; however its special structure enables us to proceed by induction on the summation of coefficients of its components.

Conventions and notations: we work throughout over the complex numbers $\mathbb{C}$. By \textit{surface}, we mean a smooth projective surface. A $\textit{curve}$ on a surface means an effective divisor on the surface. For a coherent sheaf $\shf{F}$ on $X$, we write $h^i(X, \shf{F})$ for the dimension of the cohomology group $H^i(X, \shf{F})$. When the context is clear, we simply write $H^i(\shf{F})$ for $H^i(X, \shf{F})$ and similarly for $h^i(\shf{F})$. We say a divisor class $D$ is \textit{effective} if $h^0(\sshf{S}(D))>0$. We do not distinguish between a divisor and its associated line bundle; this should not cause confusion.

The paper is organized as follows. In Section 2, we review Horikawa surface and two useful lemmas for the projective normality of regular surfaces. In Section 3, we study fixed curves and the positivity of adjoint divisors on an anticanonical rational surface. In Section 4, in the situation that a double covering over an anticanonical surface is minimal, we study the positivity of adjoint divisors involving the branch divisor. The proof of Theorem \ref{main result} is given in Section 5.

{\it Acknowledgments}: We are very grateful to B.~Purnaprajna for introducing us to the subject, suggesting the problem, patiently explaining his work \cite{GalPurna01}, \cite{Purna05}, and the thorough guidance throughout the project. We are also grateful to the referee for suggestions and corrections, which greatly improve the paper. L.S. would like to thank L. Ein for helping us formulate Lemma \ref{surjectiveness of multiplication map}, B.~Harbourne for patiently answering his questions and providing enlightening examples, and S-Y. Jow for valuable discussions on an early draft of the paper.

\section{preliminary}
First we briefly review Horikawa surfaces, which serve our main examples in this paper. Let $X$ be a smooth minimal surface of general type. It is well known that $K^2_X\ge 2p_g(X)-4$. In \cite{Horikawa76}, Horikawa studied the boundary case that $K^2_X=2p_g-4$. In this case $|K_X|$ is base point free, and the canonical map factors through a surjective degree two map onto a minimal surface $W$ (not necessarily smooth) of degree $p_g(X)-2$.

\[
\xymatrix{
 X\ar[d]_{\varphi} \ar[rd]^{\varphi_{|K_X|}} &  \\
W \ar@{^{(}->}[r] & \mathbb{P}\paren{H^0\paren{\sshf{X}(K_X)}}.}
\]

Let $\mathbb{F}_e$ denote $\mathbb{P}(\sshf{\prj{1}}\oplus \sshf{\prj{1}}(-e))$, the rational ruled surface with invariant $e\ge 0$, which has a unique negative section $\Gamma$ with $\Gamma^2=-e$. Let $f$ be the class of fibres over $\mathbb{P}^1$.

There are natural morphisms
\begin{equation}\label{natural maps}
    X\xrightarrow{\mu}X'\xrightarrow{\pi} S,
\end{equation}
where $X'$ is a ramified double covering of a rational surface $S$, $S$ is either $W$ or its blowing up along the singular point, and $\mu: X\rightarrow X'$ is the minimal resolution of singularities. Moreover, $f:=\pi\circ\mu: X\rightarrow S$ factors through $\varphi: X\rightarrow W$. Below is a concrete description of $X'$ and $S$.

\begin{theorem}[{\cite[Theorem 1.6]{Horikawa76}}]\label{classificaton of Horikawa surfaces}
Let $X$ be a minimal algebraic surface of general type with $K^2_X=2p_g-4$. Then $X$ is a minimal resolution of singularities of one of the following normal surfaces:
\begin{enumerate}
  \item a double covering of $\prj{2}$ with branch locus of degree $8$ ($p_g=3$),
  \item a double covering of $\prj{2}$ with branch locus of degree $10$ ($p_g=6$),
  \item a double covering of $\mathbb{F}_e$ whose branch locus is linearly equivalent to $6\Gamma+(p_g+2+3e)f$, where $p_g-1\ge\max\{e+3, 2e-3\}$ and $p_g-e$ is even.
  \item a double covering of $\mathbb{F}_{e-1}$ whose branch locus is linearly equivalent to $6\Gamma+4ef$, where $e=3, 4, 5$.
\end{enumerate}
Moreover in any cases, the branch locus has no infinitely near triple points.\qed
\end{theorem}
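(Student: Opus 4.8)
The plan is to reduce everything to a study of the canonical map $\varphi_{|K_X|}$ and to exploit the extremal nature of the hypothesis $K_X^2=2p_g-4$. First I would record that $p_g\ge 3$ (otherwise $K_X^2\le 0$, contradicting that $X$ is of general type) and that $X$ is regular, $q(X)=0$, which is forced in the boundary case of the Noether inequality. By standard results on canonical systems together with the minimality of $X$, one shows that $|K_X|$ has no fixed part and is base point free, so $\varphi:=\varphi_{|K_X|}$ is a genuine morphism to $\mathbb{P}^{p_g-1}$. Writing $W=\varphi(X)$ and $d=\deg\varphi$, one has $K_X^2=d\cdot\deg W$, and since $W$ is a nondegenerate surface in $\mathbb{P}^{p_g-1}$ we have $\deg W\ge p_g-2$.

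The crucial step is to show $d=2$ and $\deg W=p_g-2$. For this I would pass to a general member $C\in|K_X|$, a smooth irreducible curve with $p_a(C)=K_X^2+1=2p_g-3$ by adjunction. The restriction sequence $0\to\mathcal{O}_X\to\mathcal{O}_X(K_X)\to\mathcal{O}_C(K_X|_C)\to 0$ together with $q(X)=0$ gives $h^0(C,K_X|_C)=p_g-1$, while $K_X|_C$ is a special divisor of degree $K_X^2=2p_g-4$ (indeed $K_C=2K_X|_C$). Clifford's theorem then yields $h^0(C,K_X|_C)\le\tfrac12\deg(K_X|_C)+1=p_g-1$, so equality holds in Clifford's inequality. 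Since $K_X|_C$ is neither trivial nor the full $K_C$, the equality case forces $C$ to be hyperelliptic and $\varphi|_C$ to factor through the $2{:}1$ map onto a rational normal curve of degree $p_g-2$. Hence $\varphi$ itself is $2{:}1$, so $d=2$ and $\deg W=p_g-2$; that is, $W$ is a surface of minimal degree.

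Next I would invoke the classification of surfaces of minimal degree in $\mathbb{P}^{p_g-1}$: $W$ is a linearly embedded $\mathbb{P}^2$ (when $p_g=3$), the Veronese surface (when $p_g=6$), a smooth rational normal scroll, or a cone over a rational normal curve. In each case the desingularization $S$ of $W$ is $\mathbb{P}^2$ or a Hirzebruch surface $\mathbb{F}_e$, and $X'$ (the normalization of $W$ in the function field of $X$) is a ramified double cover $\pi:X'\to S$. Writing its branch divisor as a member of $|2B|$, one has $\pi_*\mathcal{O}_{X'}=\mathcal{O}_S\oplus\mathcal{O}_S(-B)$ and $K_{X'}=\pi^*(K_S+B)$, whence $p_g=h^0(K_S+B)$ and $K_{X'}^2=2(K_S+B)^2$. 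Imposing $K_{X'}^2=2p_g-4$ together with the explicit cohomology on $\mathbb{P}^2$ and $\mathbb{F}_e$ then pins down $B$: over $\mathbb{P}^2$ the relation $2(b-3)^2=(b-1)(b-2)-4$ forces $b=4$ or $b=5$, i.e. branch degrees $8$ and $10$; over $\mathbb{F}_e$ one computes $K_S+B=\Gamma+\tfrac12(p_g+e-2)f$, so that $B=3\Gamma+\tfrac12(p_g+2+3e)f$, the half-integer coefficient forcing $p_g\equiv e\pmod 2$ and the positivity and base point freeness of $K_S+B$ forcing $p_g-1\ge\max\{e+3,2e-3\}$. The cone case, where the vertex must be blown up, is exactly what produces the exceptional family over $\mathbb{F}_{e-1}$ in (4).

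The main obstacle, and the place where the most care is needed, is twofold. First, establishing base point freeness of $|K_X|$ and the degree/image analysis requires the small-$p_g$ cases (notably $p_g=3$, where $W$ degenerates to a plane) to be handled by hand, since there the numerics are tight. Second, and more seriously, one must control the singularities of $X'$ and the relation between $X'$, its minimal resolution $\mu\colon X\to X'$, and the minimality of $X$: the double cover acquires a singularity precisely over each singular point of the branch curve, and for $X$ to be a minimal surface of general type the branch must carry at worst the simple singularities compatible with the canonical model, so that $\mu$ introduces no $(-1)$-curves. Ruling out the configurations that would spoil minimality is exactly the content of the final assertion that the branch locus has no infinitely near triple points; verifying this, and checking that the stated numerical ranges are both necessary and sufficient for such an $X$ to exist and be minimal, is the technical heart of the argument.
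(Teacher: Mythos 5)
First, a structural point: the paper contains no proof of this statement to compare against. It is Horikawa's classification theorem, quoted verbatim from \cite[Theorem 1.6]{Horikawa76} with a terminal \emph{qed} symbol precisely because it is imported as known background, not proved. So your proposal can only be measured against Horikawa's original argument, and in outline it follows the same classical route: regularity and base point freeness of $|K_X|$ on the Noether line, Clifford's theorem with equality on a general canonical curve to show $\varphi_{|K_X|}$ is $2:1$ onto a surface of minimal degree $p_g-2$, the del Pezzo--Bertini classification of minimal degree surfaces, and then double-cover numerics. The computations you do carry out are correct: on $\mathbb{P}^2$ the equation $2(b-3)^2=(b-1)(b-2)-4$ gives $b=4,5$, i.e.\ branch degrees $8$ and $10$ with $p_g=3,6$; on $\mathbb{F}_e$ one indeed gets $K_S+B=\Gamma+\tfrac12(p_g-2+e)f$ and hence $2B=6\Gamma+(p_g+2+3e)f$, with integrality forcing $p_g\equiv e\pmod 2$.

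Nevertheless, as a proof the proposal has genuine gaps, which you flag but do not fill. (i) The statements $q(X)=0$ and base point freeness of $|K_X|$ are asserted as ``standard''; they are substantive results occupying a real portion of Horikawa's paper, not formal consequences of minimality. (ii) The passage from ``a general $C\in|K_X|$ is hyperelliptic with $K_X|_C$ composed with the $g^1_2$'' to ``$\deg\varphi=2$'' needs an argument (e.g.\ that $\varphi(C)=W\cap H$ is a rational normal curve of degree $p_g-2$, so $\deg W=p_g-2$ and $\deg\varphi=K_X^2/\deg W=2$); you also implicitly assume the canonical image is a surface, though ruling out a curve image is easy since $\deg(K_X|_C)=K_X^2>0$. (iii) Most seriously, the numerical restrictions $p_g-1\ge\max\{e+3,2e-3\}$ in (3), the precise family $6\Gamma+4ef$ on $\mathbb{F}_{e-1}$ with $e=3,4,5$ in the cone case (4), and the final assertion about infinitely near triple points \emph{are} the content of the theorem, and your sketch only gestures at them as ``the technical heart.'' Note also that your gloss on the last assertion is slightly off: an ordinary triple point of the branch locus is perfectly allowed (it produces a rational double point of type $D_4$ on the double cover); what must be excluded is a triple point infinitely near another triple point, i.e.\ a non-negligible singularity, because the canonical resolution of such a point strictly changes $K^2$ and $\chi$ and would contradict the extremal equality $K_X^2=2p_g-4$. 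Deriving these constraints from the extremal numerics is where the actual work of Horikawa's proof lies, so what you have is a correct road map rather than a proof.
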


Next we present two lemmas which help to reduce projective normality of surfaces to that of curves. The first one is an observation by Gallego and Purnaprajna {\cite[p.~154]{GalPurna99}}.
\begin{lemma}\label{GP Lemma}
Let $X$ be a smooth variety with $H^1(\sshf{X})=0$. Let $E$ be a vector bundle and $L\iso\sshf{X}(C)$ be a base point free line bundle with the property that $H^1(E\otimes L^{-1})=0$. If the natural map $H^0(E|_C)\otimes H^0(L|_C)\rightarrow H^0(E\otimes L|_C)$ is surjective, then so is the natural map $H^0(E)\otimes H^0(L)\rightarrow H^0(E\otimes L)$.\qed
\end{lemma}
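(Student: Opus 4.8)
The plan is to use the divisor $C$ with $L\iso\sshf{X}(C)$, together with its defining section $s\in H^0(X,L)$, and to deduce the surjectivity of $\mu_X\colon H^0(E)\otimes H^0(L)\rightarrow H^0(E\otimes L)$ from the assumed surjectivity on $C$ by a diagram chase across the three restriction-to-$C$ sequences. The guiding observation is that the two vanishing hypotheses $H^1(\sshf{X})=0$ and $H^1(E\otimes L^{-1})=0$ are precisely what is needed to lift sections of $L|_C$ and of $E|_C$, respectively, back to $X$; base point freeness of $L$ ensures we may take $C$ to be a member of $|L|$ to which the hypothesis on $C$ genuinely applies.

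First I would record the three sequences obtained by twisting the structure sequence $\ses{\sshf{X}(-C)}{\sshf{X}}{\sshf{C}}$. Twisting by $L$ gives $\ses{\sshf{X}}{L}{L|_C}$, so the associated long exact sequence together with $H^1(\sshf{X})=0$ forces the restriction $H^0(L)\rightarrow H^0(L|_C)$ to be surjective. Twisting by $E$ and using $\sshf{X}(-C)\iso L^{-1}$ gives $\ses{E\otimes L^{-1}}{E}{E|_C}$, so the hypothesis $H^1(E\otimes L^{-1})=0$ makes the restriction $H^0(E)\rightarrow H^0(E|_C)$ surjective. Twisting by $E\otimes L$ gives $\ses{E}{E\otimes L}{E\otimes L|_C}$, in which the injection $E\hookrightarrow E\otimes L$ is multiplication by $s$; the point I would extract here is that the kernel of the restriction $\beta\colon H^0(E\otimes L)\rightarrow H^0(E\otimes L|_C)$ equals $s\cdot H^0(E)$, which already lies in $\operatorname{Im}(\mu_X)$.

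With these in hand the chase runs as follows. Given any $t\in H^0(E\otimes L)$, its restriction $\beta(t)$ lies in $H^0(E\otimes L|_C)$, which by hypothesis is the image of $\mu_C\colon H^0(E|_C)\otimes H^0(L|_C)\rightarrow H^0(E\otimes L|_C)$; write $\beta(t)=\mu_C(\sum_i\bar e_i\otimes\bar\ell_i)$. Using the two surjectivities above, lift each $\bar e_i$ to $e_i\in H^0(E)$ and each $\bar\ell_i$ to $\ell_i\in H^0(L)$. By commutativity of the square relating $\mu_X$, $\mu_C$ and the restriction maps, the element $\mu_X(\sum_i e_i\otimes\ell_i)$ has the same restriction $\beta(t)$ to $C$, so $t-\mu_X(\sum_i e_i\otimes\ell_i)\in\ker\beta=s\cdot H^0(E)\subseteq\operatorname{Im}(\mu_X)$; hence $t\in\operatorname{Im}(\mu_X)$, proving surjectivity.

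The argument is essentially formal once the three sequences are written down, so I do not expect a single hard step; the only care required is in matching each vanishing hypothesis to the correct lifting statement and in verifying that the restriction square commutes. The mild subtlety worth double-checking is the identification $\ker\beta=s\cdot H^0(E)$ and its containment in the image of $\mu_X$, since this is exactly the piece that absorbs the discrepancy left over after restricting to $C$.
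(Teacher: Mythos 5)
Your proof is correct: the three twists of the structure sequence, the two lifting statements coming from $H^1(\sshf{X})=0$ and $H^1(E\otimes L^{-1})=0$, and the identification $\ker\beta=s\cdot H^0(E)\subseteq\operatorname{Im}(\mu_X)$ together give exactly the standard diagram-chase argument, which is the one the paper relies on (it cites the observation of Gallego--Purnaprajna rather than reproving it). This is essentially the same approach, so nothing further is needed; the only cosmetic remark is that base point freeness of $L$ plays no role in your chase once $C$ is fixed, since it is only needed in applications to choose a good member of $|L|$.
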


The second is Green's $H^0$-lemma \cite{Green84}.

\begin{lemma}\label{Green}
Let $C$ be a smooth projective curve and let $L$ and $M$ be line bundles on $C$. Assume that $W\subseteq H^0(L)$ is base point free and $h^1(M\otimes L^{-1})\le \dim W-2$. Then $W\otimes H^0(M)\rightarrow H^0(L\otimes M)$ is surjective.\qed
\end{lemma}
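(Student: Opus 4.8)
The plan is to prove the $H^0$-lemma by combining the base-point-free pencil trick with an induction on $\dim W$. Write $w=\dim W$ and $r=h^1(M\otimes L^{-1})$, so the hypothesis reads $r\le w-2$; in particular $w\ge 2$. The pencil trick simultaneously handles the base case and supplies the engine for the induction. If $\sigma_0,\sigma_1$ span a base-point-free pencil $\Pi\subseteq W$, then they have no common zero, so the Koszul sequence
\begin{equation*}
0\to L^{-1}\xrightarrow{(-\sigma_1,\,\sigma_0)}\sshf{C}\oplus\sshf{C}\xrightarrow{(\sigma_0,\,\sigma_1)} L\to 0
\end{equation*}
is exact. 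Tensoring with $M$ and taking cohomology yields
\begin{equation*}
H^0(M)^{\oplus 2}\xrightarrow{(\sigma_0,\,\sigma_1)}H^0(L\otimes M)\to H^1(M\otimes L^{-1}),
\end{equation*}
so the cokernel of multiplication by $\Pi$ injects into $H^1(M\otimes L^{-1})$ and has dimension at most $r$. When $w=2$ and $r\le 0$ this already proves the lemma.

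For the inductive step I would establish the sharper quantitative bound
\begin{equation*}
\dim\operatorname{coker}\bigl(W\otimes H^0(M)\to H^0(L\otimes M)\bigr)\le\max\{0,\ r-(w-2)\},
\end{equation*}
which reduces to the pencil bound at $w=2$ and gives the lemma as soon as $r\le w-2$. Proceeding by induction on $w$, I would choose a general hyperplane $W_0\subset W$; since $W$ is base-point-free with $w\ge 3$, the only hyperplanes with a base point are the $w-1\ge 2$-dimensional family $\ker(\operatorname{ev}_p)$, $p\in C$, so a general $W_0$ is again base-point-free. Writing $W=W_0\oplus\langle\sigma\rangle$ for general $\sigma$, the cokernels are related by $Q_W=Q_{W_0}/\operatorname{Im}(\bar\sigma)$, where $\bar\sigma\colon H^0(M)\to Q_{W_0}$ is induced by multiplication by $\sigma$. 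The induction closes provided $\bar\sigma\ne 0$ whenever $Q_{W_0}\ne 0$; that is, a general added section must strictly shrink a nonzero cokernel.

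The main obstacle is precisely this last claim: that for general $\sigma$ the classes $\sigma\cdot H^0(M)$ are not already contained in the image $I_0$ of $W_0\otimes H^0(M)$. The condition $\sigma\cdot H^0(M)\subseteq I_0$ is closed and linear in $\sigma$, so if it failed to give $\bar\sigma\ne 0$ generically it would hold for every $\sigma\in W$, forcing the full image $I_W$ to coincide with $I_0$, and iterating downward to coincide with the image of a single base-point-free pencil. I would rule this out by a general-position analysis of the induced maps $\bar\sigma_i\colon H^0(M)\to \tilde Q:=H^0(L\otimes M)/\bigl(\sigma_0\cdot H^0(M)\bigr)$ attached to pencils $\Pi_i=\langle\sigma_0,\sigma_i\rangle$: the pencil trick bounds each $\operatorname{coker}\bar\sigma_i$ by $r$, and the base-point-freeness of $W$ is exactly what prevents their images from being aligned, so that $w-1$ of them combine to cut the cokernel down by the expected amount. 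Verifying that base-point-freeness indeed enforces enough general position, rather than a degenerate overlap of the $\operatorname{Im}\bar\sigma_i$, is the delicate point on which the whole estimate turns.

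Finally, I would note the clean conceptual reformulation through the syzygy bundle $M_W=\ker(W\otimes\sshf{C}\xrightarrow{\operatorname{ev}}L)$: tensoring $0\to M_W\to W\otimes\sshf{C}\to L\to 0$ with $M$ shows that the cokernel in question embeds into $H^1(C,M_W\otimes M)$, so the vanishing $H^1(C,M_W\otimes M)=0$ suffices. Here the pencil case corresponds to $M_\Pi\cong L^{-1}$, and the quantity $h^1(M\otimes L^{-1})=h^1(M_\Pi\otimes M)$ is exactly the input to be dissipated across the $w-2$ steps separating $\Pi$ from $W$. I expect the same general-position difficulty identified above to reappear in controlling the cohomology along this filtration, confirming that it, and not the homological bookkeeping, is the genuine crux of the argument.
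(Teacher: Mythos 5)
The paper gives no proof of this lemma at all---it is quoted from \cite{Green84}, where it is established by Koszul-cohomology duality---so your attempt must stand on its own, and it has a genuine gap precisely at the point you flag. The pencil case is fine: the Koszul sequence gives $\operatorname{coker}\hookrightarrow H^1(M\otimes L^{-1})$. But the engine of your induction---that for general $\sigma$ the map $\bar\sigma\colon H^0(M)\to Q_{W_0}$ is nonzero whenever $Q_{W_0}\neq 0$---is never proved, and nothing you establish supports it. Your linearity observation shows only that if the drop fails for general $\sigma$ it fails for all $\sigma$, i.e.\ $I_W=I_{W_0}$; by itself this contradicts nothing, since a priori the image could simply stabilize while the cokernel stays nonzero. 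The ``iterate downward to a pencil'' step is also invalid: failure at the top level yields only $I_W=I_{W_0}$; to conclude that $I_W$ equals the image of a single pencil you would need the drop to fail at \emph{every} intermediate level, which does not follow, and a single failure anywhere in the chain destroys the claimed bound $\dim\operatorname{coker}\le\max\{0,\,r-(w-2)\}$. Structurally, the hypothesis $h^1(M\otimes L^{-1})\le \dim W-2$ must be spent at the rate of one unit per step, and your scheme has no mechanism that spends it: the obstruction to surjectivity lives in $H^1$ (equivalently, dually, in $H^0(K_C\otimes L\otimes M^{-1})$), not in the evaluation geometry of $W$, so base-point-freeness alone cannot force the images $\operatorname{Im}\bar\sigma_i$ into general position. (A minor slip besides: the non-base-point-free hyperplanes form the one-dimensional family $p\mapsto\ker(\operatorname{ev}_p)$, not a $(w-1)$-dimensional one; your conclusion that a general hyperplane is base point free for $w\ge 3$ is still correct.)

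There are two standard ways to realize your ``one unit per step'' intuition correctly. Green's own route is dual: $\operatorname{coker}(\mu)^\vee$ is identified with the space of classes $t\in H^1(K_C\otimes L^{-1}\otimes M^{-1})$ annihilated by cup product with every $\sigma\in W$, and the hypothesis, rewritten by Serre duality as $h^0(K_C\otimes L\otimes M^{-1})\le w-2$, forces $t=0$. The elementary alternative spends the hypothesis on \emph{points} rather than on sections: choose $w-2$ general points $p_1,\dots,p_{w-2}$; since a general point drops $h^0(K_C\otimes L\otimes M^{-1})$ by one, we get $h^1\bigl(M\otimes L^{-1}(p_1+\cdots+p_{w-2})\bigr)=0$; for general points $W(-p_1-\cdots-p_{w-2})$ is a pencil which is base point free as a subspace of $H^0\bigl(L(-\sum p_i)\bigr)$, so the pencil trick puts every section of $L\otimes M$ vanishing at all the $p_i$ into the image of $\mu$; the remaining $w-2$ dimensions are filled by products $\sigma_i m_i$, where $\sigma_i\in W$ vanishes at the $p_j$ for $j\neq i$ but not at $p_i$, and $m_i\in H^0(M)$ does not vanish at $p_i$. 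In that argument the drop-by-one genuinely occurs, because $h^0\bigl(K_C\otimes L\otimes M^{-1}(-p)\bigr)=h^0\bigl(K_C\otimes L\otimes M^{-1}\bigr)-1$ for $p$ general---exactly the cohomological mechanism your induction is missing. As it stands, your proposal is a correct reduction of the lemma to an unproven claim that is essentially equivalent to the lemma itself.
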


\begin{remark}\label{Green Remark}
The same argument in \cite{Green84} goes through as long as $C$ is a connected projective reduced curve (not necessarily irreducible) with invertible dualizing sheaf.
\end{remark}

\section{Fixed curves and adjoint divisors on anticanonical rational surfaces}
Given a surface $S$, $K_S$ denotes it canonical divisor class. Recall that, by Castelnuovo's criterion, a surface $S$ is rational if and only if $h^1(\sshf{S})=h^0(2K_S)=0$. A rational surface $S$ is called \textit{anticanonical} if $\dim|-K_S|=h^0(-K_S)-1\ge 0$. We will be mostly concerned with anticanonical rational surface with $\dim|-K_S|\ge 1$. In particular, any rational ruled surface $\mathbb{F}_e$ or its blowing up at less than 8 points is of this type. We start with a simple lemma.

\begin{lemma}\label{effectiveness of nef divisors}
Any nef divisor on an anticanonical rational surface is effective.
\end{lemma}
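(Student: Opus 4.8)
The plan is to combine Riemann--Roch with a vanishing of the top cohomology group, the latter being where the anticanonical hypothesis enters decisively. Write $D$ for the given nef divisor. Since $S$ is rational we have $h^1(\sshf{S})=h^2(\sshf{S})=0$, hence $\chi(\sshf{S})=1$, and Riemann--Roch on a surface gives
\[
\chi(\sshf{S}(D))=1+\tfrac12\,D\cdot(D-K_S)=1+\tfrac12\bigl(D^2-D\cdot K_S\bigr).
\]
Because $D$ is nef we have $D^2\ge 0$; and because $-K_S$ is effective (this is exactly the hypothesis $\dim|-K_S|\ge 0$), nefness of $D$ yields $D\cdot(-K_S)\ge 0$, that is $D\cdot K_S\le 0$. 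Hence $D^2-D\cdot K_S\ge 0$ and $\chi(\sshf{S}(D))\ge 1$.

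Next I would reduce the problem to showing $h^2(\sshf{S}(D))=0$. Once this vanishing is known, the identity $\chi=h^0-h^1+h^2$ gives $h^0(\sshf{S}(D))=\chi(\sshf{S}(D))+h^1(\sshf{S}(D))\ge \chi(\sshf{S}(D))\ge 1$, so that $D$ is effective.

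The main step, and the only delicate point, is the vanishing $h^2(\sshf{S}(D))=0$. By Serre duality $h^2(\sshf{S}(D))=h^0(K_S-D)$, so I must rule out $K_S-D$ being effective. Suppose to the contrary that $K_S-D$ is effective. Since $S$ is rational, $p_g=h^0(K_S)=0$; in particular $K_S\not\sim 0$, so that $-K_S$ is a \emph{nonzero} effective divisor. Then $-D=(K_S-D)+(-K_S)$ is a sum of effective divisors of which one is nonzero, hence $-D$ is itself a nonzero effective divisor. Choosing any ample divisor $A$ on $S$, a nonzero effective divisor meets $A$ positively, so $(-D)\cdot A>0$, i.e.\ $D\cdot A<0$, contradicting the nefness of $D$. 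Therefore $K_S-D$ is not effective, $h^2(\sshf{S}(D))=0$, and the argument concludes as in the previous paragraph.

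I expect the Riemann--Roch estimate to be entirely routine; the crux is the $h^2$-vanishing, where the effectivity of $-K_S$ (and the fact that it is nonzero, forced by $p_g=0$) is used essentially. Indeed, this same hypothesis governs both the sign $D\cdot K_S\le 0$ in the Riemann--Roch computation and the exclusion of an effective $K_S-D$, so it is the single feature of anticanonical rational surfaces that makes the statement go through.
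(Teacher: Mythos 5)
Your proof is correct and is essentially the same argument as the paper's: the paper disposes of this lemma by citing Riemann--Roch (via Harbourne's Corollary II.3), and your write-up simply supplies the details of that standard argument — $\chi(\sshf{S}(D))\ge 1$ from nefness and effectivity of $-K_S$, plus $h^2(\sshf{S}(D))=h^0(K_S-D)=0$ since otherwise $-D$ would be a nonzero effective divisor, contradicting nefness.
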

\begin{proof}
This follows from Riemann-Roch theorem, see \cite[Corollary II.3]{Harbourne97}.
\end{proof}

The proposition below concerns fixed curves on an anticanonical rational surface, and shall be well known to experts.

\begin{proposition}\label{H^1 of structure sheaf}
Let $S$ be a rational surface with $\dim |-K_S|\ge 1$. Suppose that $C$ is a curve on $S$ with $h^0(\sshf{S}(C))=1$. Then $h^1(C, \sshf{C})=0$.
\end{proposition}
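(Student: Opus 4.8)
The plan is to translate the vanishing of $h^1(C,\mathcal{O}_C)$ into a statement on $S$ through the structure sequence of $C$, and then to exploit the anticanonical pencil together with the rigidity hypothesis $h^0(\mathcal{O}_S(C))=1$. First I would tensor nothing and simply take cohomology of the short exact sequence $0\to\mathcal{O}_S(-C)\to\mathcal{O}_S\to\mathcal{O}_C\to 0$. Since $S$ is rational, $h^1(\mathcal{O}_S)=q(S)=0$ and $h^2(\mathcal{O}_S)=p_g(S)=0$, so the relevant segment of the long exact sequence sandwiches $H^1(C,\mathcal{O}_C)$ between two vanishing groups and yields an isomorphism $H^1(C,\mathcal{O}_C)\cong H^2(S,\mathcal{O}_S(-C))$. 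Serre duality on the smooth surface $S$ then gives $H^2(S,\mathcal{O}_S(-C))\cong H^0(S,\mathcal{O}_S(K_S+C))^{*}$. Thus it suffices to show that the adjoint class $K_S+C$ is \emph{not} effective.

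Second, I would argue by contradiction. Suppose $h^0(\mathcal{O}_S(K_S+C))>0$ and choose an effective divisor $G\in|K_S+C|$. Because $\dim|-K_S|\ge 1$, the anticanonical system contains at least a pencil of effective divisors; for every $A\in|-K_S|$ the divisor $G+A$ is effective and linearly equivalent to $G-K_S\sim C$. Adding the fixed divisor $G$ defines an injection $|-K_S|\hookrightarrow|C|$, whence $\dim|C|\ge\dim|-K_S|\ge 1$, that is $h^0(\mathcal{O}_S(C))\ge 2$. This contradicts the hypothesis $h^0(\mathcal{O}_S(C))=1$, so $h^0(\mathcal{O}_S(K_S+C))=0$ and therefore $h^1(C,\mathcal{O}_C)=0$. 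Note this argument makes no use of reducedness or connectedness of $C$, so it applies to an arbitrary effective divisor.

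The cohomological part is entirely routine, relying only on the vanishings $q(S)=p_g(S)=0$ valid on any rational surface and on Serre duality. The one place that needs an idea is the final step of the reduction: it is precisely the hypothesis $\dim|-K_S|\ge 1$ (rather than merely $\ge 0$) that converts effectivity of $K_S+C$ into a genuinely positive-dimensional subfamily of $|C|$, which is incompatible with the rigidity $h^0(\mathcal{O}_S(C))=1$. I therefore expect no real obstacle beyond correctly identifying this as the crux and keeping track of the orientation of the injection $|-K_S|\hookrightarrow|C|$.
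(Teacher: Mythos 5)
Your proposal is correct and follows essentially the same route as the paper: the structure sequence of $C$ plus rationality of $S$ identifies $H^1(C,\sshf{C})$ with $H^0(K_S+C)^{*}$ via Serre duality, and then the decomposition $C=(K_S+C)+(-K_S)$ together with $\dim|-K_S|\ge 1$ shows effectivity of $K_S+C$ would force $h^0(\sshf{S}(C))\ge 2$, contradicting rigidity. Your injection $|-K_S|\hookrightarrow|C|$ is just a more explicit phrasing of the paper's one-line argument, so there is nothing to add.
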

\begin{proof}
The exact sequence
\begin{equation*}
    \ses{\sshf{S}(-C)}{\sshf{S}}{\sshf{C}}
\end{equation*}
yields the long exact sequence
\begin{equation*}
    \cdots\rightarrow H^1(\sshf{S})\rightarrow H^1(\sshf{C})\rightarrow H^2(\sshf{S}(-C))\rightarrow H^2(\sshf{S})\rightarrow 0.
\end{equation*}
Since $S$ is rational, $h^1(\sshf{S})=h^2(\sshf{S})=0$. By Serre duality, $h^2(\sshf{S}(-C))=h^0(\sshf{S}(K_S+C))$. Since $C=(K_S+C)+(-K_S)$ and $h^0(\sshf{S}(-K_S))\ge 2$, we get that if $h^0(\sshf{S}(K_S+C))\ge 1$, then $h^0(\sshf{S}(C))\ge 2$, contradicting to the assumption. Therefore we deduce that $h^0(\sshf{S}(K_S+C))=0$, and it follows that $H^1(C, \sshf{C})=0$.
\end{proof}

\begin{remark}
According to \cite[Theorem 1.7]{Artin62}, any curve $C$ on a smooth surface with $h^1(\sshf{C})=0$ has the property that for each sub-curve $C'\subseteq C$, $h^1(C', \sshf{C'})=0$. This implies that $C$ is a chain of rational curves and any intersection of its components is transverse. This type of curves appear in many contexts, for instance, if $\pi: \tilde{V}\rightarrow V$ is a resolution of an isolated rational singularities of a singular surface $V$ with exceptional integral curves $C_i$, then any $C=\sum a_iC_i$ with $a_i\ge 0$ but not all zero satisfies the property that $h^1(C, \sshf{C})=0$, cf.~\cite{Artin66}.
\end{remark}

If the anticanonical linear system $|-K_S|$ has the fixed part $F$ ($F\neq \emptyset$), we consider its decomposition
\begin{equation*}
    |-K_S|=F+|M|,
\end{equation*}
where $|M|$ is the moving part. Note that $M$ is nef, and hence effective by Lemma \ref{effectiveness of nef divisors}.

The example below is communicated to us by Harbourne, indicating that in general $F$ is neither irreducible nor reduced.
\begin{example}
Fix a line $L$ on $\prj{2}$ and four distinct points $p_1, p_2, p_3, p_4$ on $L$. Blowing up $\prj{2}$ along all $p_i$, we get $\mu: X\rightarrow \prj{2}$ with the exceptional divisors $E_{p_i}$. Then for each $i$, take a point $q_i$ on $E_{p_i}$ avoiding the strict transform $\widetilde{L}$ of $L$. Blowing up along all $q_i$, we get $\varphi: S\rightarrow X$ with the exceptional divisors $E_{q_i}$. Put $\pi: =\mu\circ\varphi$ and $\overline{L}$ for the strict transform of $\widetilde{L}$ and $N_i$ for the strict transforms of $E_{p_i}$. Then $K^2_S=1$ and the fixed part $F$ of $|-K_S|$ is given by
\begin{equation*}
    F=2\overline{L}+N_1+N_2+N_3+N_4.
\end{equation*}
\end{example}
In Section 5, we need to deal with multiplication maps on $F$. The non-reducedness of $F$ causes a technical difficulty. Thanks to Proposition \ref{H^1 of structure sheaf} and its following remark, we are able to get around it by proceeding by an induction, see Lemma \ref{surjectiveness of multiplication map}.

\begin{lemma}\label{lower bound of M.F}
Let $S$ be an anticanonical rational surface with $\dim|-K_S|\ge 1$. Suppose that $|-K_S|$ has the fixed part $F$ and write $|-K_S|=F+|M|$, where $|M|$ is the moving part. Then $M\cdot F\ge 2$.
\end{lemma}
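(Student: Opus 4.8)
The plan is to reduce the inequality to the single fact that the fixed part $F$ is rigid, i.e. $h^0(\sshf{S}(F))=1$, and then feed this into Proposition \ref{H^1 of structure sheaf}. First I would record the adjunction computation: since $-K_S = M + F$, we have $K_S \cdot F = -(M+F)\cdot F = -M\cdot F - F^2$, so the arithmetic genus formula $2p_a(F)-2 = F^2 + K_S\cdot F$ collapses to
\begin{equation*}
    M\cdot F = 2 - 2p_a(F).
\end{equation*}
Thus the asserted bound $M\cdot F\ge 2$ is equivalent to $p_a(F)\le 0$, and since $p_a(F) = 1 - \chi(\sshf{F}) = 1 - h^0(\sshf{F}) + h^1(\sshf{F})$ with $h^0(\sshf{F})\ge 1$ (as $F\neq\emptyset$), it suffices to prove $h^1(\sshf{F}) = 0$.

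To obtain $h^1(\sshf{F})=0$ from Proposition \ref{H^1 of structure sheaf}, the key step is to establish $h^0(\sshf{S}(F))=1$. I would argue by contradiction: suppose $|F|$ contains a member $F'\neq F$. Because $S$ is rational, $h^0(\sshf{S})=1$, so the only effective divisor linearly equivalent to $0$ is $0$; hence $F'\ge F$ would force $F'-F=0$, contradicting $F'\neq F$. Therefore there is a prime component $P$ of $F$ with $\text{mult}_P(F') < \text{mult}_P(F)$. Since $P$ is a fixed component of $|-K_S|$ whose multiplicity is entirely accounted for in $F$, the moving system $|M|$ has no fixed components, so a general $M_0\in|M|$ does not contain $P$. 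Then $F'+M_0\in|-K_S|$ satisfies $\text{mult}_P(F'+M_0)=\text{mult}_P(F') < \text{mult}_P(F)$, contradicting the defining property of $F$ that $\text{mult}_P(F)$ is the minimal multiplicity along $P$ among all members of $|-K_S|$. Here the hypothesis $\dim|-K_S|\ge 1$ is used to guarantee that $|M|$ genuinely moves, so that such a general $M_0$ exists.

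Having established $h^0(\sshf{S}(F))=1$, Proposition \ref{H^1 of structure sheaf} applies directly to the curve $F$ and yields $h^1(\sshf{F})=0$. Consequently $\chi(\sshf{F}) = h^0(\sshf{F})\ge 1$, so $p_a(F) = 1 - \chi(\sshf{F}) \le 0$, and the displayed adjunction identity gives $M\cdot F = 2 - 2p_a(F)\ge 2$, as desired.

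I expect the rigidity of $F$, namely the step $h^0(\sshf{S}(F))=1$, to be the main obstacle: it is the only point where one must use genuinely that $F$ is the fixed part rather than an arbitrary effective divisor, and it requires the careful observation that a general member of the moving system contributes no multiplicity along a fixed prime component.
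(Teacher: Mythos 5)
Your proof is correct, but it follows a genuinely different route from the paper's. The paper argues entirely on $S$: from the structure sequence of $F$ one gets $h^0(\sshf{F}) = 1 + h^1(\sshf{S}(-F))$, Serre duality converts $h^1(\sshf{S}(-F))$ into $h^1(\sshf{S}(-M))$ (since $K_S + F = -M$), and because $h^0(\sshf{S}(-M)) = h^0(\sshf{S}(-F)) = h^2(\sshf{S}(-M)) = 0$ (both $M$ and $F$ being nonzero effective), Riemann--Roch yields the exact identity $h^0(\sshf{F}) = \tfrac{1}{2}M\cdot F$; the bound then follows from $h^0(\sshf{F})\ge 1$. In particular the paper never needs the rigidity $h^0(\sshf{S}(F)) = 1$ nor Proposition \ref{H^1 of structure sheaf}. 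Your route instead proves rigidity by a linear-systems argument (a member $F'\neq F$ of $|F|$ would have smaller multiplicity than $F$ along some prime component $P$, and adding a general $M_0\in|M|$, which misses $P$ because $|M|$ has no fixed components, would contradict the minimality of multiplicities defining the fixed part), then applies Proposition \ref{H^1 of structure sheaf} to get $h^1(\sshf{F})=0$, and finishes with adjunction: $M\cdot F = 2-2p_a(F) = 2\chi(\sshf{F}) = 2h^0(\sshf{F})\ge 2$. All these steps are sound, and there is no circularity, since Proposition \ref{H^1 of structure sheaf} is proved independently of the lemma. The trade-off is this: the paper's computation is self-contained and sidesteps the rigidity question altogether, whereas your argument, at the cost of proving rigidity, makes explicit two facts that the paper later invokes without proof in Case I of the proof of Theorem \ref{main result} (namely that $F$ ``does not move'' on $S$, so that Proposition \ref{H^1 of structure sheaf} gives $h^1(\sshf{C_0})=0$ for $C_0=F$); in the end both routes produce the same identity $M\cdot F = 2h^0(\sshf{F})$.
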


\begin{proof}
Since $\dim|-K_S|\ge 1$, we have that $M\neq 0$. From the exact sequence $\ses{\sshf{S}(-F)}{\sshf{S}}{\sshf{F}}$, we see that
\begin{eqnarray*}
   h^0(\sshf{F})&=& 1+ h^1(\sshf{S}(-F))\\
   &=& 1+ h^1(\sshf{S}(-M))\quad\quad\text{by Serre duality}\\
   &=& 1-\chi(\sshf{S}(-M))\quad\quad \text{ note } h^2(\sshf{S}(-M))=h^0(\sshf{S}(-F))=0\\
   &=& -\frac{(-M)\cdot(-M-K_S)}{2}\\
   &=&\frac{M\cdot F}{2}.
\end{eqnarray*}
This implies that $M\cdot F\ge 2$.
\end{proof}

\begin{corollary}\label{vanishing of H^1}
Let $S$ be an anticanonical rational surface with non trivial fixed part of $|-K_S|$. With notations as above. Suppose $B$ is a divisor on $S$ such that $K_S+B$ is nef. Then $H^1(B-F)=0$.
\end{corollary}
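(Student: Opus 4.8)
The plan is to reduce $H^1(B-F)=0$ to a vanishing on a single curve, using the anticanonical decomposition $-K_S=F+M$. The starting observation is the numerical identity $B-F=(K_S+B)+M$, which follows from $K_S+F=-M$. Since $K_S+B$ is nef by hypothesis and $M$ is nef, $B-F$ is nef; equivalently, by Serre duality the claim is the same as $H^1(-(B+M))=0$, because $K_S-(B-F)=K_S-B+F=-(B+M)$.

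First I would peel off a general member of the moving part. Taking $M$ to be a general member of $|M|$, which we may assume to be an integral curve, and tensoring the structure sequence of $M$ by $\mathcal{O}_S(B-F)$ gives
\begin{equation*}
0\to \mathcal{O}_S(K_S+B)\to \mathcal{O}_S(B-F)\to \mathcal{O}_M((B-F)|_M)\to 0,
\end{equation*}
where I used $B-F-M=K_S+B$. The key input is that the restricted line bundle has large degree: by adjunction,
\begin{equation*}
\deg((B-F)|_M)-(2p_a(M)-2)=(B-F-K_S-M)\cdot M=B\cdot M,
\end{equation*}
and $B\cdot M=(K_S+B)\cdot M+F\cdot M+M^2\ge 2$ because $(K_S+B)\cdot M\ge 0$, $M^2\ge 0$, and $M\cdot F\ge 2$ by Lemma \ref{lower bound of M.F}. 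Hence $\deg((B-F)|_M)>2p_a(M)-2$, so $H^1(M,(B-F)|_M)=0$, and the long exact sequence shows $H^1(B-F)$ is a quotient of $H^1(K_S+B)$. It therefore suffices to prove $H^1(S,K_S+B)=0$.

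This last vanishing is the main obstacle, and I do not expect it to follow from Kawamata--Viehweg or Ramanujam vanishing: those would require $B+M=(K_S+B)+F+2M$ to be nef and big, but $B+M$ need not be nef, since the fixed part $F$ may contain components of very negative self-intersection (for instance on a blow-up of $\mathbb{P}^2$ at several collinear points, $F$ contains the strict transform of the line, on which $B+M$ acquires negative degree). To obtain $H^1(K_S+B)=0$ I would instead exploit the structure of $F$ established above: $F$ is a chain of rational curves with $h^1(\mathcal{O}_F)=0$ (Proposition \ref{H^1 of structure sheaf} and the following remark), and $h^1(\mathcal{O}_{F'})=0$ for every subcurve $F'\subseteq F$, together with the normalization $M\cdot F\ge 2$. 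Concretely, the idea is a second peeling argument against the fixed curve, restricting to the components of $F$ one at a time, i.e. an induction on the number of components of $F$ in the spirit of the argument announced for Lemma \ref{surjectiveness of multiplication map}; alternatively, since $K_S+B$ is nef one can appeal to the vanishing theory for nef divisors on anticanonical rational surfaces, the delicate case being $(K_S+B)\cdot(-K_S)=0$, which is again controlled by the rationality of the components of $F$.

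In summary, the reduction to the nef divisor $K_S+B$ is formal, and the curve estimate that powers the first step is exactly the content of Lemma \ref{lower bound of M.F}; the only genuinely delicate point is the residual vanishing $H^1(K_S+B)=0$, which I expect to require the chain-of-rational-curves structure of the fixed part $F$ rather than any off-the-shelf vanishing theorem.
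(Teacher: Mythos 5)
Your reduction runs in the wrong direction, and the step you defer is the entire content of the statement. The first half of your argument is sound: $B-F=(K_S+B)+M$ is nef, and restricting to a member of $|M|$ reduces the claim to $H^1(K_S+B)=0$. But you then leave $H^1(K_S+B)=0$ unproven, offering only candidate strategies; as written this is not a proof. What makes the gap genuine rather than cosmetic is that you have traded a divisor for which the vanishing is immediate for one where it is delicate. The paper's proof is one line: $B-F$ is nef and
\begin{equation*}
(B-F)\cdot(-K_S)=(K_S+B)\cdot(-K_S)+M\cdot(-K_S)\ \ge\ M\cdot(F+M)\ \ge\ M\cdot F\ \ge\ 2
\end{equation*}
by Lemma \ref{lower bound of M.F}, whence $H^1(B-F)=0$ by \cite[Theorem III.1]{Harbourne97}, the vanishing theorem for nef classes $D$ on an anticanonical rational surface with $D\cdot(-K_S)\ge 2$. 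Your peeling replaces $B-F$ by $K_S+B$, which is also nef but may well satisfy $(K_S+B)\cdot(-K_S)=0$; that is exactly the range in which $h^1$ of a nef class on an anticanonical surface can fail to vanish (e.g.\ $h^1(-mK_S)=m>0$ on a rational elliptic surface), so no off-the-shelf vanishing applies and a further case analysis is forced. Your route can in fact be rescued: if $(K_S+B)\cdot(-K_S)\ge 1$, the same theorem of Harbourne gives the vanishing, and if $(K_S+B)\cdot(-K_S)=0$, then $(K_S+B)\cdot F=(K_S+B)\cdot M=0$, and a Hodge-index argument (split into $K_S^2>0$, $K_S^2=0$, $K_S^2<0$, using $M\cdot F\ge 2$) forces $K_S+B=0$. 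But this rescue requires precisely the theorem you were missing, plus extra work, whereas that theorem applied directly to $B-F$ finishes the proof immediately.

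A secondary issue: you assume a general member of the moving part $|M|$ is integral. This need not hold (the moving part may be composed with a pencil), and your adjunction/degree argument for $h^1\bigl((B-F)|_M\bigr)=0$ uses integrality. This is repairable, but it is another unjustified step in the proposal.
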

\begin{proof}
We have $B-F=(K_S+B)+M$ is nef, and by Lemma \ref{lower bound of M.F}, $(B-F)\cdot(-K_S)\ge M\cdot (-K_S)\ge 2$. Then it follows that $H^1(B-F)=0$ from \cite[Theorem III.1]{Harbourne97}.
\end{proof}

We conclude the section with two propositions concerning the positivity of adjoint divisors.

\begin{proposition}\label{inequality1}
Let $S$ be an anticanonical rational surface and $L$ be a divisor on $S$ with the property that $L\cdot C\ge 3$ for any curve $C$ on $S$. Then $(K_S+L)\cdot(-K_S)\ge 3$ unless
\begin{enumerate}
  \item $S=\prj{2}$, $L=\sshf{\prj{2}}(3)$,
  \item $K^2_S=1, L=-3K_S$.
\end{enumerate}
In case (2), $-K_S$ is ample and has a unique base point.
\end{proposition}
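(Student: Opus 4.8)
The plan is to reduce the whole statement to the single numerical inequality
$L\cdot(-K_S)\ge K_S^2+3$. Indeed, since $S$ is anticanonical, $-K_S$ is effective, and
$(K_S+L)\cdot(-K_S)=L\cdot(-K_S)-K_S^2$. Because $-K_S$ is a nonzero effective divisor and $L\cdot C\ge 3$ for every curve, we always have $L\cdot(-K_S)\ge 3$; hence the desired inequality is immediate when $K_S^2\le 0$, and since $K_S^2\le 9$ for a rational surface, it remains only to treat $1\le K_S^2\le 9$.

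For $K_S^2>0$ I would argue by contradiction, assuming $(K_S+L)\cdot(-K_S)\le 2$, i.e. $L\cdot(-K_S)\le K_S^2+2$. First note that $L$ is nef, since $L\cdot C\ge 3>0$ for all curves. Set $\beta=\frac{L\cdot(-K_S)}{K_S^2}$, so that $\beta\le 1+\frac{2}{K_S^2}$ and the class $N:=L+\beta K_S$ satisfies $N\cdot(-K_S)=0$. The core of the argument is to prove that $N$ is nef, after which the Hodge index theorem finishes it: since $(-K_S)^2=K_S^2>0$ and $N\perp(-K_S)$, one gets $N^2\le 0$, while nefness gives $N^2\ge 0$; hence $N^2=0$ and $N\equiv 0$, that is, $L\equiv\beta(-K_S)$.

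To verify nefness I would test $N$ on the extremal rays of $\overline{NE}(S)$ using the cone theorem for surfaces. On the $K_S$-nonnegative part one has $N\cdot z=L\cdot z+\beta(K_S\cdot z)\ge 0$, because $L$ is nef and $\beta>0$. A $K_S$-negative extremal ray is generated either by a $(-1)$-curve $E$, in which case $N\cdot E\ge 3-\beta\ge 0$ as $\beta\le 1+2/K_S^2\le 3$; by a conic-bundle fiber $f$ with $-K_S\cdot f=2$; or, only when $S=\mathbb{P}^2$, by a line. The delicate point, which I expect to be the main obstacle, is the conic-bundle fiber. Here I would use that a relatively minimal conic bundle with $K_S^2<8$ carries a reducible fiber $C_1+C_2\equiv f$ with $C_i$ irreducible, so that $L\cdot f=L\cdot C_1+L\cdot C_2\ge 6$ and $N\cdot f\ge 6-2\beta\ge 0$; if instead no fiber is reducible then $K_S^2=8$, forcing $\beta\le 5/4<3/2$ and again $N\cdot f\ge 3-2\beta>0$.

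Finally, knowing $L\equiv\beta(-K_S)$, I would pin down the exceptions by testing against a distinguished curve. If $S$ carries a $(-1)$-curve $E$, then $\beta=L\cdot E\ge 3$ together with $\beta\le 1+2/K_S^2$ forces $K_S^2=1$ and $\beta=3$, giving $L=-3K_S$; the condition $L\cdot C\ge 3$ then reads $-K_S\cdot C\ge 1$, so $-K_S$ is ample by Nakai--Moishezon and $S$ is a del Pezzo surface of degree one, whose anticanonical pencil classically has a single base point. If $S$ has no $(-1)$-curve it is $\mathbb{P}^2$ or some $\mathbb{F}_e$; the fiber test $2\beta=L\cdot f\ge 3$ contradicts $\beta\le 5/4$ on $\mathbb{F}_e$, while on $\mathbb{P}^2$ writing $L=dH$ gives $d=3\beta\le 11/3$, hence $d=3$ and $L=\sshf{\prj{2}}(3)$. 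These are exactly cases (1) and (2), completing the proof.
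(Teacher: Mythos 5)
Your proof is correct, but it takes a genuinely different route from the paper's. The paper argues by cases on $K_S^2$: it computes directly on $\prj{2}$ and $\mathbb{F}_e$ (the cases $K_S^2=9,8$), and for $0<K_S^2\le 7$ it imports the list of exceptional pairs $(S,L)$ with $(K_S+L)\cdot(-K_S)<3$ from \cite[Proposition 1.10]{GalPurna01}, then eliminates each exception using the hypothesis $L\cdot C\ge 3$ --- via $(-1)$-curves on del Pezzo surfaces of degree $\le 7$, and via a Riemann--Roch/Kodaira-vanishing contradiction for the conic-fibration case. You instead make the argument self-contained: assuming $(K_S+L)\cdot(-K_S)\le 2$, you form $N=L+\beta K_S$ with $N\perp K_S$, prove $N$ is nef by testing on the Mori cone, and conclude $N\equiv 0$ by the Hodge index theorem, so that $L\equiv\beta(-K_S)$; the exceptions then fall out by intersecting with a $(-1)$-curve, a ruling fiber, or a line. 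This buys independence from the adjunction-theoretic classification in \cite{GalPurna01} and explains structurally why every exception is a multiple of $-K_S$, at the cost of invoking the cone theorem. Two of your steps deserve a remark, though neither is a gap. First, you never check the line ray on $\prj{2}$, but this is vacuously fine: there $\rho(S)=1$, so $N\perp K_S$ already forces $N\equiv 0$. Second, your worry about conic-bundle fibers with reducible fibers is actually moot: a fiber-type $K_S$-negative extremal contraction has relative Picard rank one, so it can only occur when $\rho(S)=2$, i.e.\ $S=\mathbb{F}_e$ with $K_S^2=8$, where your bound $\beta\le 5/4$ settles it; your reducible-fiber estimate $L\cdot f\ge 6$ is a harmless (and correct) safeguard covering a case that never arises. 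Finally, note that passing from $L\equiv -3K_S$ to $L=-3K_S$ uses that numerical and linear equivalence coincide on a rational surface, which you use implicitly and which indeed holds.
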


\begin{proof}
Note to begin with that $L$ is effective because of nefness of $L$ and $S$ being rational. Thus $L^2\ge 3$; by Nakai-Moishezon criterion, $L$ is actually ample.
As $L\cdot (-K_S)\ge 3$, we can assume that $K^2_S>0$. Next we discuss by cases.

Case (1): $K^2_S=9$. Then $S=\mathbb{P}^2$, and $L=\sshf{\mathbb{P}^2}(m)$ for some $m\ge 3$. It is evident that $(K_S+L)\cdot(-K_S)\ge 3$ except for $L=-K_S=\sshf{\mathbb{P}^2}(3)$.

Case (2): $K^2_S=8$. Then $S=\mathbb{F}_e$ for some $e\ge 0$, and $L=a\Gamma+bf$ with $a\ge 3, b\ge ae+3$. It follows that
  \begin{eqnarray*}
    (K_S+L)\cdot(-K_S)&=&(a\Gamma+bf)\cdot(2\Gamma+(e+2)f)-8\\
                      &=&2(b-ae)+a(e+2)-8\\
                      &\ge& 4.
  \end{eqnarray*}

Case (3): $0< K^2_S\le 7$. According to the analysis in \cite[Proposition 1.10]{GalPurna01}, for ample divisor $L$, $(K_S+L)\cdot(-K_S)\ge 3$ unless one of the following exceptions occurs
  \begin{enumerate}
    \item[(a)] $L=-K_S$,
    \item[(b)] $K^2_S=1$ and $L=-2K_S$,
    \item[(c)] $K^2_S=1$ and $L=-3K_S$,
    \item[(d)] $K^2_S=2$ and $L=-2K_S$,
    \item[(e)] $K_S+L$ is base point free, $L$ is very ample and $(S, L)$ is a conic fibration over $\mathbb{P}^1$ under $|K_S+L|$.
  \end{enumerate}

Note that (b) is impossible, because $L\cdot (-K_S)\ge 3$. For (a) and (d), note that $S$ is a del Pezzo surface of degree $\le 7$, so there exists a (-1)-rational curve $C\subset S$. It follows that $L\cdot C\le 2(-K_S)\cdot C=2$. A contradiction to the assumption. For (e), we have $(K_S+L)^2=0$ and $(K_S+L)\cdot(-K_S)=2$, which imply that $L\cdot(K_S+L)=2$. Therefore $K_S+L$ cannot be effective. Using Riemann-Roch theorem and Kodaira vanishing, we obtain
\begin{equation*}
    0=\chi(K_S+L)=\frac{(K_S+L)\cdot L}{2}+1=2,
\end{equation*}
which is absurd. This shows that (e) does not occur either.

In the exceptional case (c), result on base points of $|-K_S|$ follows from \cite[Theorem III.1 (b)]{Harbourne97}.
\end{proof}

\begin{proposition}\label{base point freeness of K+L}
Let $S$ be an anticanonical rational surface and $L$ be a divisor on $S$ with the property that $L\cdot C\ge 3$ for any curve $C$ on $S$. Then the following are equivalent:

\begin{enumerate}
  \item $K_S+L$ is nef;
  \item $K_S+L$ is base point free;
  \item $L^2\ge 7$.
\end{enumerate}
 Moreover if any of the equivalent conditions holds, then $K_S+L$ is ample unless $K_S+L=0$.
\end{proposition}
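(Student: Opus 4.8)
The plan is to establish the cycle of implications $(2)\Rightarrow(1)\Rightarrow(3)\Rightarrow(2)$ and then deal with the ampleness assertion separately. I first record a standing fact, proved exactly as in the opening lines of the proof of Proposition \ref{inequality1}: the hypothesis $L\cdot C\ge 3$ forces $L$ to be nef, hence effective by Lemma \ref{effectiveness of nef divisors}, hence $L^2\ge 3$, and therefore $L$ is ample by the Nakai--Moishezon criterion. The implication $(2)\Rightarrow(1)$ is immediate, since base-point-free divisors are nef.

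For $(1)\Rightarrow(3)$ I would argue purely numerically. Set $t:=L\cdot(-K_S)$; since $-K_S$ is effective and $L\cdot C\ge 3$ for every curve, we have $t\ge 3$. Nefness of $K_S+L$ gives $(K_S+L)^2\ge 0$, i.e.\ $L^2\ge 2t-K_S^2$, while Proposition \ref{inequality1} gives $(K_S+L)\cdot(-K_S)\ge 3$, i.e.\ $K_S^2\le t-3$ (the two listed exceptions there both have $L^2=9$ and may be set aside, as they already satisfy (3)). Eliminating $K_S^2$ yields $L^2\ge t+3\ge 6$. To upgrade $6$ to $7$ I would invoke a parity observation: $L^2+L\cdot K_S=L\cdot(L+K_S)=2p_a(L)-2$ is even, so $L^2\equiv t\pmod 2$. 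If $t=3$ then $L^2$ is odd and $\ge 6$, hence $\ge 7$; if $t\ge 4$ then $L^2\ge t+3\ge 7$ directly. Either way $L^2\ge 7$. For the converse $(3)\Rightarrow(2)$ I would apply Reider's theorem \cite{Reider88}: $L$ is nef with $L^2\ge 7\ge 5$, and the only possible Reider obstructions are effective divisors $E$ with $L\cdot E\le 1$, which cannot exist since $L\cdot C\ge 3$ for every curve. Hence $K_S+L$ is base-point-free, closing the cycle.

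For the ampleness statement I assume the (now equivalent) conditions hold and $K_S+L\ne 0$, and verify the Nakai--Moishezon conditions. First, $(K_S+L)^2>0$: if instead $(K_S+L)^2=0$, then since $K_S+L$ is base-point-free and nonzero, its complete linear system defines a morphism onto a curve, and after Stein factorization (using that $S$ is rational) a fibration $S\to\mathbb{P}^1$ with $K_S+L$ numerically a multiple $dF$ of the fiber class $F$. The constraint $(K_S+L)\cdot(-K_S)=d(2-2p_a(F))\ge 3$ forces the general fiber to be rational and $d\ge 2$, whence $L\cdot F=-K_S\cdot F=2<3$, contradicting the hypothesis. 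Thus $K_S+L$ is big and nef. Second, $(K_S+L)\cdot C>0$ for every curve $C$: if $(K_S+L)\cdot C=0$ then $L\cdot C=-K_S\cdot C\ge 3$, so by adjunction $C^2=2p_a(C)-2-K_S\cdot C\ge 1>0$, while the Hodge index theorem applied to the big and nef class $K_S+L$ forces $C^2<0$ for any curve orthogonal to it, a contradiction. Therefore $K_S+L$ is ample.

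The two nonformal points are the parity upgrade from $L^2\ge 6$ to $L^2\ge 7$, and—more substantively—the exclusion of $(K_S+L)^2=0$ in the ampleness argument. I expect the latter to be the main obstacle: one must recognize that such a degeneration is exactly a conic-bundle structure and observe that the hypothesis $L\cdot C\ge 3$ is precisely strong enough to kill it, since a fiber $F$ would satisfy $L\cdot F=2$. The Reider input for $(3)\Rightarrow(2)$ is otherwise a black box, and the remaining numerical steps are routine once the inequalities of Proposition \ref{inequality1} are in hand.
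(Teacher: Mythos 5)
Your proof is correct, but in two of its four steps it takes a genuinely different route from the paper's. For $(1)\Rightarrow(3)$ the paper argues in one line: $K_S+L$ nef is effective by Lemma \ref{effectiveness of nef divisors}, so the hypothesis gives $(K_S+L)\cdot L\ge 3$, the parity of $L\cdot(K_S+L)$ upgrades this to $\ge 4$, and adding $L\cdot(-K_S)\ge 3$ gives $L^2\ge 7$; you instead eliminate $K_S^2$ between $(K_S+L)^2\ge 0$ and the inequality of Proposition \ref{inequality1} and then use the same parity trick. Both work; the paper's version avoids Proposition \ref{inequality1} entirely, while yours is slightly more robust in that it handles the case $K_S+L=0$ explicitly through the exceptions (the paper's one-liner tacitly assumes $K_S+L\neq 0$ there). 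For bigness, i.e.\ $(K_S+L)^2>0$, the paper takes a general smooth member $C=\sum C_i\in|K_S+L|$ by Bertini, notes the components are disjoint so $C_i^2=(K_S+L)\cdot C_i=0$, and gets $2g(C_i)-2\le -3$ by adjunction --- no fibration structure needed; your Stein-factorization/conic-bundle argument reaches the same contradiction ($L\cdot F=2<3$) but additionally leans on Proposition \ref{inequality1}, so you should add one line on why its exceptions do not interfere: exception (1) is excluded by $K_S+L\neq 0$, and exception (2) has $(K_S+L)^2=4K_S^2=4\neq 0$, contrary to your contradiction hypothesis. That is a one-line fill, not a gap. The remaining steps --- Reider for $(3)\Rightarrow(2)$ and the Hodge-index/adjunction exclusion of integral curves orthogonal to $K_S+L$ --- coincide with the paper's. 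What your route buys is the explicit geometric identification of the degenerate case as a conic fibration (the very structure the paper confronts as exception (e) inside the proof of Proposition \ref{inequality1}); what the paper's buys is brevity and independence from that proposition.
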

\begin{proof}
(2)$\Rightarrow$(1) is obvious. (3)$\Rightarrow$(2) by \cite[Thereom 1]{Reider88}. (1)$\Rightarrow$(3):  Since $K_S+L$ is nef, hence effective, so $(K_S+L)\cdot L\ge 4$, as $(K_S+L)\cdot L$ is an even number. Then $L^2-3\ge (K_S+L)\cdot L\ge 4$, so $L^2\ge 7$.

Suppose from now on that $K_S+L\neq 0$. We first show that $(K_S+L)^2>0$. By Bertini's theorem, a general element $C\in |K_S+L|$ is smooth. Write $C=\sum^k_{i=1} C_i$, where $C_i$ are nonsingular components. Suppose to the contrary that $(K_S+L)^2=0$. Then $C^2_i=(K_S+L)\cdot C_i=0$ for each $i$. Therefore $K_S\cdot C_i=-L\cdot C_i\le -3$. By adjunction, $2g(C_i)-2=C^2_i+K_S\cdot C_i\le -3$, which implies that $g(C_i)<0$, a contradiction. Thus $(K_S+L)^2>0$ provided that $K_S+L\neq 0$.

If there exists an integral curve $C$ on $S$ such that $(K_S+L)\cdot C=0$, then $K_S\cdot C=-L\cdot C\le -3$. On the other hand, by Hodge index theorem, $C^2<0$ as $(K_S+L)^2>0$. It follows that $2p_a(C)-2=C^2+K_S\cdot C\le -4$, this is impossible again. So $(K_S+L)\cdot C>0$ for any curve $C$. Then the ampleness of $K_S+L$ is a consequence of Nakai-Moishezon criterion.
\end{proof}

\section{Double coverings over anticanonical rational surfaces}

Throughout this section, we assume that $\pi: X\rightarrow S$ is a ramified double covering, where $X$ is a minimal surface (possibly singular) and $S$ is an anticanonical rational surface. Let $B$ be a divisor, $\Gamma\in|2B|$ be the branch locus and $R\subset X$ be the ramification locus. Then $\sshf{X}(R)=\pi^*\sshf{S}(B)$, the induced morphism $\pi: R\rightarrow \Gamma$ is an isomorphism, and $\pi_*\sshf{X}\iso \sshf{S}\oplus\sshf{S}(-B)$, cf.~\cite[p. 243]{Lazarsfeld04}. The following statements are obvious.

\begin{proposition}\label{positivity of B}
With assumptions and notations as above, we have
\begin{enumerate}
  \item $K_X=\pi^*K_S+R=\pi^*(K_S+B)$.
  \item $(K_S+B)\cdot C\ge 0$, for any curve $C\subset S$.
  \item $K^2_S+B\cdot K_S\le0$; when $K^2_S>0$, $K^2_S+B\cdot K_S<0$.
  \item $B$ is effective.
\end{enumerate}
\end{proposition}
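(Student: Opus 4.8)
The plan is to read off all four assertions from the ramification data $\sshf{X}(R)=\pi^*\sshf{S}(B)$, from the minimality of $X$, and from the effectivity furnished by the anticanonical hypothesis on $S$. First, for (1) I would apply the ramification formula for the double covering $\pi$, namely $K_X=\pi^*K_S+R$; since the identity $\sshf{X}(R)=\pi^*\sshf{S}(B)$ gives $R=\pi^*B$ as divisor classes, this yields $K_X=\pi^*K_S+\pi^*B=\pi^*(K_S+B)$. For (2), the point is that minimality of $X$ makes $K_X$ nef (interpreted on the minimal resolution when $X$ is singular). Then for any integral curve $C\subset S$ the divisor $\pi^*C$ is effective and nonzero, and the projection formula together with (1) gives
\[
0\le K_X\cdot\pi^*C=\pi^*(K_S+B)\cdot\pi^*C=2\,(K_S+B)\cdot C,
\]
so $(K_S+B)\cdot C\ge 0$; that is, $K_S+B$ is nef.

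Next, (4) follows by combining (2) with the anticanonical hypothesis: by Lemma \ref{effectiveness of nef divisors} the nef class $K_S+B$ is effective, while $-K_S$ is effective since $\dim|-K_S|\ge 0$, so $B=(K_S+B)+(-K_S)$ is a sum of effective classes and hence effective. For the first, non-strict, inequality in (3) I would write $K^2_S+B\cdot K_S=K_S\cdot(K_S+B)=-(-K_S)\cdot(K_S+B)$ and note that the product of the effective class $-K_S$ with the nef class $K_S+B$ is $\ge 0$, which gives $K^2_S+B\cdot K_S\le 0$ at once.

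The hard part will be the strict inequality in (3) when $K^2_S>0$. Here I would bring in the Hodge index theorem: squaring (1) gives $(K_S+B)^2=\tfrac12 K^2_X$, so once $K_X$ is big (equivalently $(K_S+B)^2>0$) the nef class $K_S+B$ has positive self-intersection and its numerical orthogonal complement is negative definite. Since $(-K_S)^2=K^2_S>0$, the class $-K_S$ is not numerically trivial and cannot lie in that complement, so $(-K_S)\cdot(K_S+B)$ is strictly positive, i.e. $K^2_S+B\cdot K_S<0$. I expect this to be the only genuine obstacle, since it is exactly here that one needs the extra positivity $(K_S+B)^2>0$; the remaining three parts are formal consequences of the projection formula and of the effectivity supplied by $S$ being anticanonical.
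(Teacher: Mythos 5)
Your arguments for (1), (2), (4) and the non-strict inequality in (3) are correct and essentially reproduce the paper's proof: the ramification formula, minimality of $X$ plus the projection formula, pairing the nef class $K_S+B$ against the effective class $-K_S$, and the decomposition $B=(K_S+B)+(-K_S)$ together with Lemma \ref{effectiveness of nef divisors}. The genuine gap is in the strict inequality of (3). You apply the Hodge index theorem taking $K_S+B$ as the class of positive square, and for this you must assume $(K_S+B)^2>0$, i.e.\ that $K_X$ is big. That is not a hypothesis of the proposition --- the hypothesis is $K^2_S>0$ --- and it does not follow from the hypotheses: for $S=\prj{2}$ with branch locus a smooth sextic one has $B=-K_S$, $X$ a (minimal) $K3$ surface, $K^2_S=9>0$, yet $(K_S+B)^2=0$, so your argument never gets started. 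It also misses cases where the conclusion does hold, e.g.\ a double cover with $K_X$ nef and $K^2_X=0$ (a minimal properly elliptic surface), where again $(K_S+B)^2=0$.

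The paper runs the Hodge index theorem in the opposite direction, from the class whose positivity is actually hypothesized: if $K^2_S>0$ and $K_S\cdot(K_S+B)=0$, then HIT gives $(K_S+B)^2\le 0$, while nefness of $K_S+B$ gives $(K_S+B)^2\ge 0$; the paper states this as the outright contradiction ``$(K_S+B)^2<0$''. This orientation uses only the stated hypothesis $K^2_S>0$, but note that it silently discards the equality case of HIT, namely $K_S+B\equiv 0$; since $K_S+B$ is effective, that case means $B=-K_S$, which is exactly the sextic example above (listed in the paper's introduction as a typical example), and there $K^2_S+B\cdot K_S=0$, so the strict inequality of (3) genuinely fails. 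In short: your proof has a real gap ($K_X$ big is assumed rather than given), and it cannot be closed as written; the correct strict statement requires excluding $B=-K_S$ (equivalently $K_X\neq 0$), and under that exclusion the paper's orientation of HIT proves it in full, whereas yours would still cover only the sub-case $(K_S+B)^2>0$.
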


\begin{proof}
(1) is clear. Since $X$ is minimal, $2(K_S+B)\cdot C=\pi^*(K_S+B)\cdot\pi^*C=K_X\cdot\pi^*C\ge 0$. This gives (2). (3) is a special case of (2), as $-K_S$ is effective. When $K^2_S>0$, by Hodge index theorem, $K^2_S+B\cdot K_S<0$, for otherwise
$(K_S+B)^2<0$. For (4), since $B=(K_S+B)+(-K_S)$ and $-K_S$ is effective, it suffices to show $K_S+B$ is effective. This is a consequence of Lemma \ref{effectiveness of nef divisors}.
\end{proof}

\begin{lemma}\label{positivity of K+B+L}
Let $L$ be a divisor on $S$ such that $L\cdot C\ge 2$ for any curve $C$ on $S$. Then
\begin{enumerate}
  \item $K_S+B+L$ is ample and base point free, and $H^1(r(K_S+B+L))=0$ for any $r\ge 1$.
  \item If in addition $L\cdot (-K_S)\ge 3$, then $K_S+B+L$ is very ample and normally generated.
\end{enumerate}
\end{lemma}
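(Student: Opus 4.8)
The plan is to treat $K_S+B+L$ as a single nef divisor on the anticanonical rational surface $S$ and feed it into the positivity machinery available for such surfaces, rather than trying to apply Reider's theorem to the decomposition $K_S+(B+L)$. The latter is the naive route, but it fails: since $B$ is in general not nef (by Proposition \ref{positivity of B} only $K_S+B$ is), the divisor $B+L$ need not be nef either, so $K_S+(B+L)$ is not of the form to which Reider's criterion applies. This is exactly the point that forces the argument below, and I regard it as the conceptual obstacle of the lemma.

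For part (1) I would first record that $L$ is ample: from $L\cdot C\ge 2>0$ for every curve $C$ the class $L$ is nef, hence effective by Lemma \ref{effectiveness of nef divisors}; writing $L=\sum a_iC_i$ with $a_i>0$ gives $L^2=\sum a_i(L\cdot C_i)\ge 2>0$, so $L$ is ample by Nakai--Moishezon. As $K_S+B$ is nef by Proposition \ref{positivity of B}(2), the sum $K_S+B+L=(K_S+B)+L$ is then ample, in particular nef. The decisive numerical input is the estimate $(K_S+B+L)\cdot(-K_S)\ge 2$. Indeed Proposition \ref{positivity of B}(3) gives $(K_S+B)\cdot(-K_S)=-(K^2_S+B\cdot K_S)\ge 0$, while $-K_S$ is a nonzero effective divisor, say $-K_S=\sum a_iC_i$ with all $a_i\ge 1$, so that $L\cdot(-K_S)=\sum a_i(L\cdot C_i)\ge 2$. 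With $K_S+B+L$ nef and of intersection $\ge 2$ against $-K_S$, base point freeness and the vanishing $H^1(r(K_S+B+L))=0$ for all $r\ge 1$ (each multiple being again nef with $r(K_S+B+L)\cdot(-K_S)\ge 2$) follow from Harbourne's vanishing and base-point results for nef divisors on anticanonical rational surfaces \cite{Harbourne97}, i.e.\ the same Theorem III.1 already invoked in Corollary \ref{vanishing of H^1} and Proposition \ref{inequality1}.

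For part (2), the strengthened hypothesis $L\cdot(-K_S)\ge 3$ upgrades the same computation to $(K_S+B+L)\cdot(-K_S)\ge 3$. I would then deduce very ampleness and normal generation of $K_S+B+L$ from the corresponding criteria for nef divisors $D$ on anticanonical rational surfaces satisfying $D\cdot(-K_S)\ge 3$, as established by Gallego--Purnaprajna \cite{GalPurna01} (and \cite{Reider88} for the very ampleness input). The hypothesis $L\cdot(-K_S)\ge 3$ of the lemma is tailored precisely to clear the threshold $D\cdot(-K_S)\ge 3$ of those theorems, and the exceptional configurations that would otherwise obstruct it are exactly the ones classified in Proposition \ref{inequality1}.

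In summary, everything reduces to two easy computations, ampleness of $K_S+B+L$ and the bounds $(K_S+B+L)\cdot(-K_S)\ge 2$ (resp.\ $\ge 3$), after which the anticanonical-surface theorems do the work. The reason the statement is phrased through intersection with $-K_S$ rather than through a self-intersection (Reider-type) bound is the non-nefness of $B$: the only positivity of $K_S+B+L$ that one can safely control on $S$ is its pairing with $-K_S$, and it is exactly this quantity that Harbourne's and Gallego--Purnaprajna's results convert into base point freeness, cohomology vanishing, very ampleness, and normal generation.
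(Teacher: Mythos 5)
Your proposal is correct and follows essentially the same route as the paper: ampleness of $K_S+B+L$ from nefness of $K_S+B$ plus ampleness of $L$, the numerical bound $r(K_S+B+L)\cdot(-K_S)\ge 2$ (resp.\ $\ge 3$) obtained by splitting off $(K_S+B)\cdot(-K_S)\ge 0$ and $L\cdot(-K_S)\ge 2$, then Harbourne's Theorem III.1 for base point freeness and $H^1$-vanishing, and Gallego--Purnaprajna's Theorem 1.3 for very ampleness and normal generation. Your extra detail on why $L$ is ample and your framing remarks about Reider's theorem are fine but do not change the argument.
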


\begin{proof}
(1) Since $K_S+B$ is nef and $L$ is ample, we deduce that $K_S+B+L$ is ample. Base point freeness and $H^1$ vanishing follow from \cite[Theorem III.1]{Harbourne97}, because
\begin{equation*}
   (-K_S)\cdot r(K_S+B+L)=r(-K_S)\cdot(K_S+B) +r(-K_S)\cdot L\ge 2.
\end{equation*}

(2) If $L\cdot (-K_S)\ge 3$, then $(-K_S)\cdot(K_S+B+L)\ge 3$. The statement follows from the criterion for $N_p$ property on rational surfaces in \cite[Theorem 1.3]{GalPurna01}.
\end{proof}

\section{Proof of Main theorem}
We begin with a few lemmas, which will be needed in the proof of our main result.
\begin{lemma}\label{H^1 vanishing}
Let $M$ be an ample and base point free divisor on a regular surface $S$ and $C$ a curve on $S$. Assume that $h^0(\sshf{C})=1$ and $-K_S-C$ is effective.
Then
\begin{equation*}
    H^i(M-C)=0 \quad\quad \text{ for\: }i>0.
\end{equation*}
\end{lemma}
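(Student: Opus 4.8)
The plan is to establish the two vanishings $H^1(M-C)=0$ and $H^2(M-C)=0$ separately, using the hypotheses $h^0(\sshf{C})=1$, the effectivity of $-K_S-C$, and the ampleness and base point freeness of $M$. First I would handle the top cohomology: by Serre duality, $H^2(M-C)\iso H^0(K_S-M+C)^\vee$. Since $M$ is ample, $M-C-K_S = M+(-K_S-C)$ is the sum of an ample divisor and an effective one (by hypothesis $-K_S-C$ is effective), hence $M-C-K_S$ is big; in particular $-(M-C-K_S)=K_S-M+C$ cannot be effective unless it is zero, and a degree/positivity check shows $h^0(K_S-M+C)=0$. So $H^2(M-C)=0$ comes essentially for free from ampleness of $M$.

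The substantive step is $H^1(M-C)=0$. Here I would exploit the structure sheaf sequence
\begin{equation*}
    \ses{\sshf{S}(M-C)}{\sshf{S}(M)}{\sshf{C}(M)}.
\end{equation*}
Taking cohomology gives an exact piece
\begin{equation*}
    H^0(\sshf{S}(M))\rightarrow H^0(\sshf{C}(M|_C))\rightarrow H^1(\sshf{S}(M-C))\rightarrow H^1(\sshf{S}(M)).
\end{equation*}
Because $M$ is ample on the regular (rational) surface $S$, Kodaira vanishing gives $H^1(\sshf{S}(M))=0$, so it suffices to show that the restriction map $H^0(S,M)\rightarrow H^0(C, M|_C)$ is surjective. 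This is where the hypothesis $h^0(\sshf{C})=1$ should enter: the condition $h^0(\sshf{C})=1$ forces $H^1(\sshf{C})=0$ (this is exactly the content of Proposition \ref{H^1 of structure sheaf}, via the effectivity of $-K_S-C$ which makes $C$ behave like a subcurve of an anticanonical divisor), so $C$ is a connected chain of rational curves with good cohomological behavior. I would then argue the surjectivity either directly—since $M$ is base point free and $H^1(M-C)$ is sandwiched as above—or by a dimension count comparing $h^0(M)$, $h^0(M-C)$, and $h^0(M|_C)$ using Riemann--Roch on $S$ and on $C$.

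The main obstacle I anticipate is precisely the surjectivity of $H^0(S,M)\rightarrow H^0(C,M|_C)$, equivalently ruling out a surviving contribution from $H^0(\sshf{C}(M|_C))$ in the long exact sequence. The cleanest route may be to avoid computing this directly and instead run the whole argument through Serre duality once more: $H^1(M-C)\iso H^1(K_S-M+C)^\vee$, and then show the latter vanishes by combining the bigness of $M-C-K_S$ with the fact that $M$ is base point free so that a general member of $|M|$ is a smooth curve meeting the chain $C$ appropriately. The nonreducedness or reducibility of $C$ (it need only satisfy $h^0(\sshf C)=1$, not irreducibility) is the technical point to watch, but Proposition \ref{H^1 of structure sheaf} and the remark following it supply exactly the control on $h^1(\sshf{C'})$ for subcurves $C'\subseteq C$ needed to push any such argument through.
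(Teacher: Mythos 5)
Your argument for $H^2(M-C)=0$ is fine and agrees with the paper's. The $H^1$ part, however, has a genuine gap in two places. First, Kodaira vanishing does not give $H^1(\sshf{S}(M))=0$; it gives $H^1(K_S+M)=0$. To get $H^1(M)=0$ you would need $M-K_S$ nef and big (Kawamata--Viehweg), but $M-K_S=M+(-K_S)$ is only ample plus effective, hence big but not necessarily nef. (The vanishing $H^1(M)=0$ is in fact true here, since the hypotheses force $S$ to be an anticanonical rational surface and Harbourne's Theorem III.1 shows a nef divisor with nonvanishing $H^1$ has zero intersection with $-K_S$, impossible for ample $M$ --- but that is a different argument from the one you give.) Second, and more seriously, the step you yourself flag as the main obstacle --- surjectivity of $H^0(S,M)\to H^0(C,M|_C)$ --- is, once $H^1(M)=0$ is known, \emph{equivalent} to the vanishing $H^1(M-C)=0$ you are trying to prove, since $H^1(M-C)$ is exactly the cokernel of that restriction map. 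So the ``sandwich'' argument is circular, and the dimension count is never carried out. Your fallback via Serre duality fails for the same reason as above: $M-C-K_S=M+(-K_S-C)$ is big but need not be nef, and bigness alone yields no $H^1$ vanishing. Note also that Proposition \ref{H^1 of structure sheaf} assumes $\dim|-K_S|\ge 1$, which is not among the hypotheses of this lemma, so you cannot quote it to get $h^1(\sshf{C})=0$; the paper only uses the weaker fact $H^1(\sshf{S}(-C))=0$, which follows directly from $h^0(\sshf{C})=1$ and $h^1(\sshf{S})=0$.

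The paper's proof avoids the circularity by restricting the \emph{other} divisor to the \emph{other} curve: instead of restricting $M$ to $C$, it restricts $M-C$ to a general smooth member $\Delta\in|M|$ (Bertini, using that $M$ is ample and base point free). The structure sequence of $C$ gives $H^1(\sshf{S}(-C))=0$ as above; then twisting $\ses{\sshf{S}}{\sshf{S}(\Delta)}{\sshf{\Delta}(\Delta)}$ by $-C$, it remains to control $H^1(\sshf{\Delta}(M-C))$. If $-K_S-C\neq 0$, then
\begin{equation*}
\deg\paren{(M-C)|_{\Delta}}=(M-C)\cdot\Delta>(K_S+\Delta)\cdot\Delta=\deg K_{\Delta},
\end{equation*}
because $(-K_S-C)\cdot\Delta>0$ by ampleness of $\Delta$, so this group vanishes for degree reasons on the curve $\Delta$. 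In the boundary case $\sshf{S}(-K_S-C)\iso\sshf{S}$ one has $(M-C)|_{\Delta}=K_{\Delta}$, so $h^1((M-C)|_{\Delta})=1$, but this is exactly absorbed by $h^2(-C)=h^2(K_S)=1$ and $h^2(M-C)=0$ in the long exact sequence, which still forces $h^1(M-C)=0$. This degree-on-$\Delta$ mechanism, together with the separate treatment of the case $C\in|-K_S|$, is the idea missing from your proposal.
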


\begin{proof}
By Serre duality, $h^2(M-C)=h^0(K_S+C-M)=0$, as both $-(K_S+C)$ and $M$ are effective. The short exact sequence
\begin{equation*}
    \ses{\sshf{S}(-C)}{\sshf{S}}{\sshf{C}}
\end{equation*}
yields the long exact sequence
\begin{equation*}
  0\rightarrow H^0(\sshf{S})\rightarrow H^0(\sshf{C})\rightarrow H^1(\sshf{S}(-C))\rightarrow H^1(\sshf{S})\rightarrow\cdots,
\end{equation*}
which implies that $H^1(\sshf{S}(-C))=0$.

Now take a smooth irreducible member $\Delta\in |M|$. Tensoring the short exact sequence
\begin{equation*}
    \ses{\sshf{S}}{\sshf{S}(\Delta)}{\sshf{\Delta}(\Delta)}
\end{equation*}
with $\sshf{S}(-C)$ yields the long exact sequence
\begin{equation*}
    \cdots\rightarrow H^1(\sshf{S}(-C))\rightarrow H^1(\sshf{S}(M-C))\rightarrow H^1(\sshf{\Delta}(M-C))\rightarrow\cdots
\end{equation*}
If $\sshf{S}(-K_S-C)$ is not trivial, then
\begin{equation*}
    \deg{\sshf{\Delta}(M-C)}=(M-C)\cdot\Delta>(\Delta+K_S)\cdot\Delta=\deg{K_{\Delta}},
\end{equation*}
where the inequality comes from the ampleness of $\Delta$. Therefore $H^1(\sshf{\Delta}(M-C))=0$, and hence $H^1(\sshf{S}(M-C))=0$. Otherwise if $\sshf{S}(-K_S-C)\iso \sshf{S}$, $(M-C)|_{\Delta}=K_{\Delta}$ by adjunction, which implies that $h^1((M-C)|_{\Delta})=1$. But since $h^2(-C)=h^2(K_S)=1$ and $h^2(M-C)=0$ as proved above, it follows that $h^1(M-C)=0$.
\end{proof}

There have been several authors who studied multiplications maps of two different line bundles over reduced curves. The following lemma deals with multiplication maps over a possibly non-reduced curve $C$ with the property that $h^1(\sshf{C})=0$. We hope that it could find applications in some other problems.

\begin{lemma}\label{surjectiveness of multiplication map}
Let $S$ be a surface and $C$ a curve with $h^1(\sshf{C})=0$. Put $C=\sum a_i\Gamma_i$, where $\Gamma_i$ are irreducible components of $C$. Let $L_1, L_2$ be two divisors on $S$ such that $L_1\cdot\Gamma_i>0$ for every $i$ and $L_2$ is ample and base point free. Suppose that $-K_S-\Gamma_i$ is effective for every $i$. Then the natural map
\begin{equation}\label{multiplication map}
    H^0({L_1}|_C)\otimes H^0(L_2)\rightarrow H^0((L_1+L_2)|_C)
\end{equation}
is surjective.
\end{lemma}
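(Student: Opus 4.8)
The plan is to prove surjectivity of (\ref{multiplication map}) by induction on $\sum_i a_i$, the total multiplicity of the cycle $C=\sum a_i\Gamma_i$. The base case, where $C$ is reduced and irreducible (i.e.\ $C=\Gamma_i$ for a single $i$), should be handled by Green's $H^0$-lemma (Lemma \ref{Green}): since $L_2$ is ample and base point free, $W:=H^0(L_2)$ is base point free, and we must control $h^1(L_2^{-1}\otimes(L_1+L_2)|_C)=h^1(L_1|_C)$ on the smooth curve $\Gamma_i$. Using the hypothesis that $-K_S-\Gamma_i$ is effective together with $L_1\cdot\Gamma_i>0$ and adjunction, I expect $\deg(L_1|_{\Gamma_i})$ to exceed $\deg K_{\Gamma_i}$, forcing $h^1(L_1|_{\Gamma_i})=0$, so that Green's numerical hypothesis $h^1\le \dim W-2$ is met (here I would verify $\dim W\ge 2$, which follows from $L_2$ being ample and base point free on a surface).

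For the inductive step I would peel off one component. Writing $C=C'+\Gamma_j$ for some $j$ with $C'=C-\Gamma_j$ still effective, I would use the standard exact sequence relating the structure sheaves of $C$, $C'$, and the restriction to $\Gamma_j$. Concretely, twisting the sequence
\begin{equation*}
    \ses{\sshf{\Gamma_j}(-C')}{\sshf{C}}{\sshf{C'}}
\end{equation*}
by the relevant line bundles gives, for any divisor $D$ on $S$, a restriction sequence relating $H^0(D|_C)$, $H^0(D|_{C'})$ and $H^0(D|_{\Gamma_j}(-C'))$. The idea is to place the two multiplication maps (for $C$, for $C'$, and over the single smooth curve $\Gamma_j$) into a commutative diagram with these three-term sequences as rows, so that surjectivity for $C$ follows from surjectivity for $C'$ (the inductive hypothesis, valid because $h^1(\sshf{C'})=0$ by the remark following Proposition \ref{H^1 of structure sheaf}) together with surjectivity of the map over $\Gamma_j$ twisted down by $C'$.

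The main obstacle I anticipate is the bottom row: I must show surjectivity of a map of the form $H^0(L_1|_{\Gamma_j}(-C'))\otimes H^0(L_2)\to H^0((L_1+L_2)|_{\Gamma_j}(-C'))$, i.e.\ Green's lemma applied to $L_1|_{\Gamma_j}(-C')$ rather than to $L_1|_{\Gamma_j}$ itself. Subtracting $C'$ lowers the degree, so the $h^1$-vanishing argument from the base case may fail, and I will instead need the sharper inequality $h^1\big((L_1-C')|_{\Gamma_j}\big)\le \dim H^0(L_2)-2$ with $\dim H^0(L_2)$ possibly larger than $2$; this is where the full force of $L_2$ being ample and base point free (giving a large, base-point-free $W$) is used. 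A secondary technical point is ensuring the relevant $H^1$ groups on $S$ vanish so that the columns of the diagram are exact and the three restriction maps $H^0(\text{on }S)\twoheadrightarrow H^0(\text{on }C)$ stay surjective; the hypotheses $h^1(\sshf{C})=0$ and the effectivity of $-K_S-\Gamma_i$ are precisely what feed these vanishings, via Serre duality as in Lemma \ref{H^1 vanishing}.
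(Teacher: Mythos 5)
Your proposal has the same skeleton as the paper's proof---induction on $\sum a_i$, the exact sequence $\ses{\sshf{\Gamma_j}(-C')}{\sshf{C}}{\sshf{C'}}$, and a diagram chase reducing surjectivity for $C$ to surjectivity for $C'$ plus a map on the peeled-off component---but it has a genuine gap at the decisive point: \emph{which} component gets peeled off. You take an arbitrary $j$ (your only condition is that $C-\Gamma_j$ be effective, which every component satisfies) and plan to absorb the resulting degree drop by Green's lemma with a large base-point-free $W$. This cannot work. All components are smooth rational curves (by $h^1(\sshf{C})=0$ and Artin's result, as in the paper's remark), so for the peeled map $H^0\paren{(L_1-C')|_{\Gamma_j}}\otimes W\rightarrow H^0\paren{(L_1+L_2-C')|_{\Gamma_j}}$ with $W\subseteq H^0(L_2|_{\Gamma_j})$, Green's hypothesis is $h^1\paren{(L_1-C'-L_2)|_{\Gamma_j}}\le \dim W-2$; on $\prj{1}$ the left side is $\max\{0,\,(C'+L_2-L_1)\cdot\Gamma_j-1\}$ while $\dim W-2\le L_2\cdot\Gamma_j-1$, so the condition forces $C'\cdot\Gamma_j\le L_1\cdot\Gamma_j$: the $L_2$-degree appears on both sides and cancels, so no amount of positivity of $L_2$ helps. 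Moreover, exactness of your rows needs $H^0(L_1|_C)\rightarrow H^0(L_1|_{C'})$ surjective, i.e.\ $H^1\paren{(L_1-C')|_{\Gamma_j}}=0$, i.e.\ $C'\cdot\Gamma_j\le L_1\cdot\Gamma_j+1$, which an arbitrary component need not satisfy. The paper's own example makes the failure concrete: for $C=F=2\overline{L}+N_1+\cdots+N_4$ (with $\overline{L}^2=-3$, $N_i^2=-2$, $\overline{L}\cdot N_i=1$) and a divisor $L_1$ with $L_1\cdot N_1=1$, peeling $\Gamma_j=N_1$ gives $C'\cdot N_1=2$, so $(L_1-C')|_{N_1}$ has degree $-1$ and $H^0\paren{(L_1-C')|_{N_1}}=0$, while $H^0\paren{(L_1+L_2-C')|_{N_1}}\neq 0$; the peeled map cannot surject and the chase gives nothing.

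The missing idea is exactly where the hypothesis $h^1(\sshf{C})=0$ does its real work in the paper: it guarantees (cf.\ Harbourne, as cited in the paper) the existence of a component $\Gamma_0$ with $\Gamma_0\cdot C\le 1+\Gamma_0^2$, equivalently $\Gamma_0\cdot C'\le 1$. Peeling \emph{that} component makes $\deg (L_1-C')|_{\Gamma_0}\ge 0$, so $H^1\paren{(L_1-C')|_{\Gamma_0}}=0$ (rows exact) and the peeled map is surjective by elementary multiplication of nonnegative-degree bundles on $\prj{1}$, after restricting $H^0(L_2)$ onto $\Gamma_0$ (surjective via Lemma \ref{H^1 vanishing}); no Green's lemma is needed anywhere. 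In the example above, $\overline{L}\cdot C=-2=1+\overline{L}^2$, so one must peel $\overline{L}$, not the $N_i$. A secondary slip: in your base case you apply Green's lemma to $h^1(L_1|_C)$, but with $W\subseteq H^0(L_2|_C)$ the relevant hypothesis concerns $h^1(M\otimes L^{-1})=h^1\paren{(L_1-L_2)|_C}$, not $h^1(L_1|_C)$; this is repairable on $\prj{1}$ once restriction from $S$ is known to be surjective, but as written it is the wrong quantity.
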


\begin{proof}
We do induction on $\sum a_i$. When $\sum a_i=1$, $C$ is a smooth rational curve. Since the natural map is a composition of the two maps
\begin{equation*}
    H^0(L_1|_C)\otimes H^0(L_2)\rightarrow H^0(L_1|_C)\otimes H^0(L_2|_C)\rightarrow H^0((L_1+L_2)|_C),
\end{equation*}
and $H^1(L_2-C)=0$ by Lemma \ref{H^1 vanishing}, we deduce that the map (\ref{multiplication map}) is surjective.

For general case, since $h^1(\sshf{C})=0$, there exists a component, say $\Gamma_0$, such that $\Gamma_0\cdot C\le 1+\Gamma^2_0$, cf.~\cite[Proof of Lemma 7]{Harbourne96}. Set $C'=C-\Gamma_0$, we have $\Gamma_0\cdot C'\le 1$. The short exact sequence
\begin{equation*}
    \ses{\sshf{\Gamma_0}(-C')}{\sshf{C}}{\sshf{C'}}
\end{equation*}
yields the following commutative diagram
\begin{equation*}
\includegraphics[trim=53mm 213mm 53mm 43mm, clip]{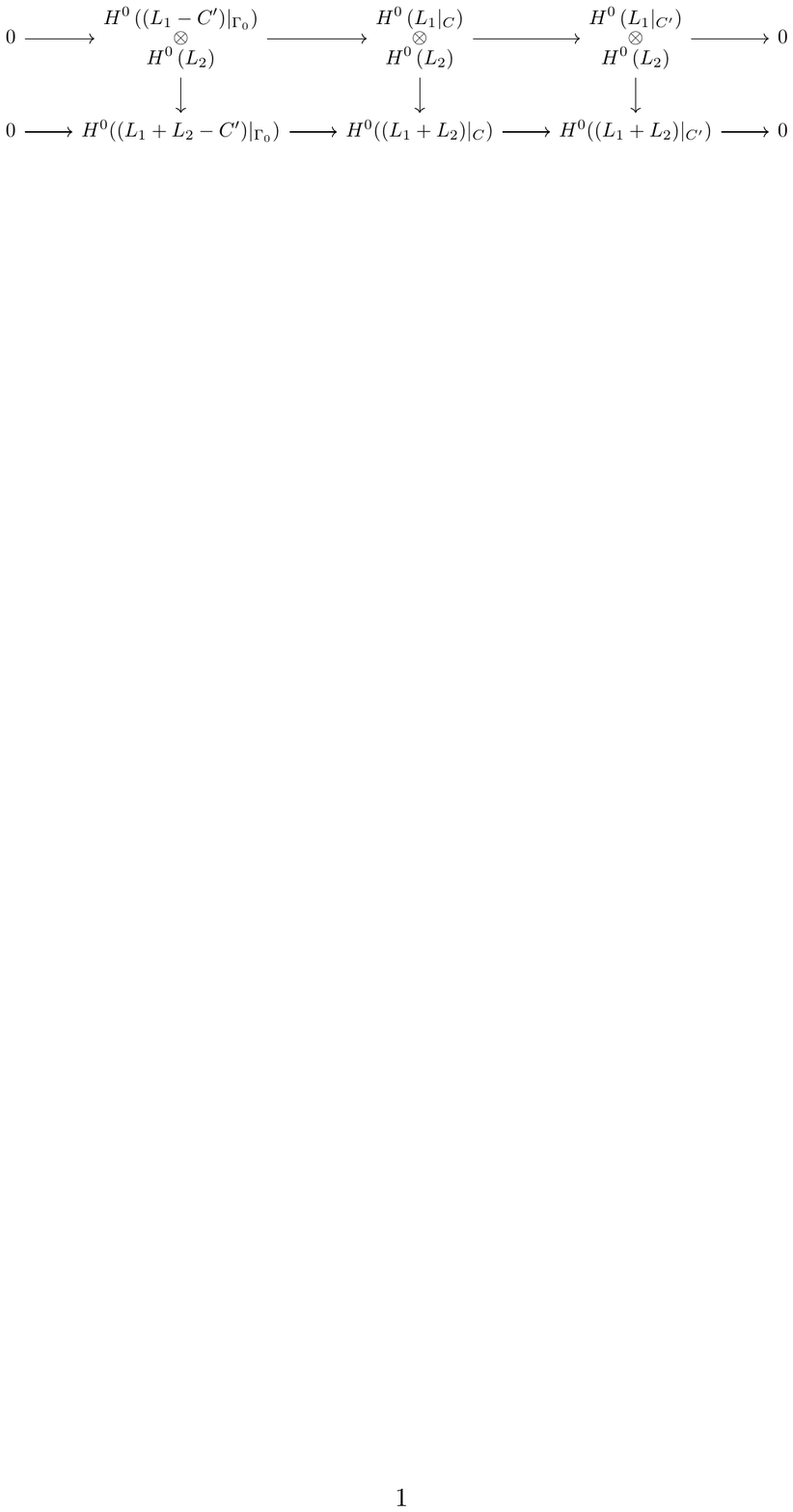}
\end{equation*}
Since $\deg(L_1-C')|_{\Gamma_0}\ge -1$, we have $H^1((L_1-C')|_{\Gamma_0})=0$; it follows that the first row is exact. Similarly so is the second one.

The surjectivity of the right column map is by induction hypothesis. The left column map factors as
\begin{eqnarray*}
    H^0\paren{(L_1-C')|_{\Gamma_0}}\otimes H^0\paren{L_2}&\xrightarrow{m_1}& H^0\paren{(L_1-C')|_{\Gamma_0}}\otimes H^0\paren{{L_2}|_{\Gamma_0}}\\
    &\xrightarrow{m_2}& H^0((L_1+L_2-C')|_{\Gamma_0}).
\end{eqnarray*}
By Lemma \ref{H^1 vanishing} again, $H^1(L_2-\Gamma_0)=0$, which implies that $m_1$ is surjective. It is evident that $m_2$ is surjective too, therefore the left column map is surjective. The statement follows from the snake lemma.
\end{proof}

\begin{remark}
The lemma implies that $H^0({L_1}|_C)\otimes H^0(L_2|_C)\rightarrow H^0((L_1+L_2)|_C)$ is surjective, but not vice versa.
\end{remark}

We record a variant of Lemma \ref{surjectiveness of multiplication map} below. Its proof is similar to that given above.
\begin{proposition}\label{surjectiveness of multiplication map'}
Let $S$ be a surface and $C=m\Gamma$, where $\Gamma$ is a smooth irreducible curve with $\Gamma^2=0$ and $m\ge 1$. Let $L_1, L_2$ be two divisors on $S$ such that $L_1\cdot\Gamma>\max\{ 2g(\Gamma), 4g(\Gamma)-L_2\cdot\Gamma-2h^1(L_2|_{\Gamma})\}$. Then the natural map
\begin{equation*}
    H^0({L_1}|_{C})\otimes H^0(L_2|_C)\rightarrow H^0((L_1+L_2)|_C)
\end{equation*}
is surjective.
\end{proposition}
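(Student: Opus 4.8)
The plan is to mimic the inductive structure of Lemma \ref{surjectiveness of multiplication map}, but now running the induction on the multiplicity $m$ rather than on a sum of coefficients, exploiting the fact that here every summand of $C=m\Gamma$ is a copy of the same smooth curve $\Gamma$. The base case is $m=1$, where $C=\Gamma$ is a single smooth irreducible curve. There the surjectivity of $H^0(L_1|_\Gamma)\otimes H^0(L_2|_\Gamma)\rightarrow H^0((L_1+L_2)|_\Gamma)$ is exactly the setting of Green's $H^0$-lemma (Lemma \ref{Green}), applied on the curve $\Gamma$. To invoke it I take $W=H^0(L_1|_\Gamma)$ and $M=L_2|_\Gamma$; I need $W$ base point free and $h^1\bigl((L_2|_\Gamma)\otimes(L_1|_\Gamma)^{-1}\bigr)\le \dim W-2$. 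The hypothesis $L_1\cdot\Gamma>2g(\Gamma)$ forces $L_1|_\Gamma$ to be very ample, hence base point free, and the degree bound $L_1\cdot\Gamma>4g(\Gamma)-L_2\cdot\Gamma-2h^1(L_2|_\Gamma)$ is precisely what I expect to rearrange, via Riemann–Roch on $\Gamma$, into Green's numerical inequality $h^1(M\otimes L^{-1})\le\dim W-2$.

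Next I would set up the inductive step. Writing $C=m\Gamma$ and $C'=(m-1)\Gamma$, the short exact sequence
\begin{equation*}
    \ses{\sshf{\Gamma}(-C')}{\sshf{C}}{\sshf{C'}}
\end{equation*}
twisted by $L_1$ (resp.\ by $L_1+L_2$) produces, after tensoring and taking global sections, a two-row commutative diagram exactly parallel to the one in Lemma \ref{surjectiveness of multiplication map}: the right column is the multiplication map for $C'=(m-1)\Gamma$, the left column is the multiplication map for the single curve $\Gamma$ with $L_1$ replaced by $L_1-C'=L_1-(m-1)\Gamma$, and the middle column is the map I want for $C=m\Gamma$. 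By the snake lemma it suffices to show the two outer columns are surjective (after checking that both rows are exact). The right column is handled by the induction hypothesis, provided the numerical conditions persist for $C'$; since $\Gamma^2=0$ we have $(L_1)\cdot\Gamma=(L_1-C')\cdot\Gamma$, so the conditions are literally unchanged under passing from $C$ to $C'$, which is the crucial simplification afforded by $\Gamma^2=0$.

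For the left column I must verify surjectivity of
\begin{equation*}
    H^0\bigl((L_1-C')|_\Gamma\bigr)\otimes H^0(L_2|_\Gamma)\rightarrow H^0\bigl((L_1+L_2-C')|_\Gamma\bigr),
\end{equation*}
which is again a statement on the single smooth curve $\Gamma$ with the first line bundle twisted down by $C'=(m-1)\Gamma$. Because $\Gamma^2=0$, the restriction $(L_1-C')|_\Gamma$ has the same degree as $L_1|_\Gamma$, so I can reuse Green's lemma with the same numerical verification as in the base case; in particular $\dim H^0((L_1-C')|_\Gamma)$ and the relevant $h^1$ are unchanged, and the hypothesis on $L_1\cdot\Gamma$ again gives both base point freeness and the degree inequality. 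Exactness of the rows follows from the vanishing $H^1\bigl((L_1-C')|_\Gamma\bigr)=0$ and $H^1\bigl((L_1+L_2-C')|_\Gamma\bigr)=0$, which hold because the degrees $(L_1-C')\cdot\Gamma=L_1\cdot\Gamma>2g(\Gamma)-2$ and similarly for $L_1+L_2-C'$ are large enough, again using $\Gamma^2=0$ to keep the degrees constant in $m$.

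The main obstacle I anticipate is the careful bookkeeping of the numerical hypothesis under the twist by $C'$, and in particular confirming that the condition $L_1\cdot\Gamma>4g(\Gamma)-L_2\cdot\Gamma-2h^1(L_2|_\Gamma)$ translates cleanly into Green's inequality $h^1(M\otimes L^{-1})\le\dim W-2$ via Riemann–Roch; the $h^1(L_2|_\Gamma)$ term in the hypothesis is exactly the correction that makes this work when $L_2|_\Gamma$ is not necessarily nonspecial. Everything else is a faithful repetition of the diagram-chase in Lemma \ref{surjectiveness of multiplication map}, the key structural point being that $\Gamma^2=0$ renders all degree and dimension quantities independent of $m$, so a single application of Green's lemma suffices at every stage of the induction.
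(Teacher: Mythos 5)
Your inductive skeleton is essentially the paper's: the paper also inducts on $m$, reducing everything to the surjectivity of the curve-level map $H^0\left((L_1-(m-1)\Gamma)|_{\Gamma}\right)\otimes H^0(L_2|_{\Gamma})\rightarrow H^0\left((L_1+L_2-(m-1)\Gamma)|_{\Gamma}\right)$ together with the vanishing $H^1\left((L_1-(m-1)\Gamma)|_{\Gamma}\right)=0$, both of which persist because $\Gamma^2=0$ keeps all degrees constant. The difference is the engine at the curve level: the paper quotes \cite[Proposition 2.2]{Butler94}, whose numerical hypothesis is exactly the one in the statement, whereas you propose to derive the curve-level surjectivity from Green's Lemma \ref{Green}. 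That substitution is a genuine gap, not bookkeeping: the inequality you "expect to rearrange via Riemann--Roch" into Green's hypothesis does not rearrange into it.

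Concretely, set $W=H^0(L_1|_\Gamma)$, $L=L_1|_\Gamma$, $M=L_2|_\Gamma$, $d_i=L_i\cdot\Gamma$, $g=g(\Gamma)$. Since $d_1>2g$, $L_1|_\Gamma$ is nonspecial, so $\dim W=d_1+1-g$. By Riemann--Roch, $h^1\left((L_2-L_1)|_\Gamma\right)=h^0\left((L_2-L_1)|_\Gamma\right)+d_1-d_2+g-1$, hence Green's condition $h^1(M\otimes L^{-1})\le\dim W-2$ is equivalent to $h^0\left((L_2-L_1)|_\Gamma\right)\le d_2-2g$, which in particular forces $d_2\ge 2g$. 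But $d_2\ge 2g$ is not implied by $d_1>\max\{2g,\,4g-d_2-2h^1(L_2|_\Gamma)\}$; the whole point of the correction term $2h^1(L_2|_\Gamma)$ is to permit $L_2|_\Gamma$ of low degree or special, and in that regime Green's lemma (with $W=H^0(L_1|_\Gamma)$) is simply inapplicable. For instance, if $\Gamma$ is elliptic and $L_2|_\Gamma\iso\sshf{\Gamma}$, the hypothesis only requires $d_1>2$ and the conclusion is trivially true since $H^0(\sshf{\Gamma})=\mathbb{C}$, yet Green's inequality reads $h^1(-L_1|_\Gamma)=d_1+g-1\le d_1-g-1$, false for every $g\ge 1$. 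So your base case (and likewise the left column of your diagram at every stage) already fails under the stated hypotheses. What your argument does prove is the proposition with the extra assumption $L_2\cdot\Gamma\ge 2g(\Gamma)$ --- the classical Castelnuovo--Mumford range, where $h^1(L_2|_\Gamma)=0$ and the term is vacuous; that weaker statement happens to suffice for the paper's application in Case III of Theorem \ref{main result} (elliptic curves with $d_1=d_2\ge 3$), but it is not the proposition as stated. Note also that since both factors in the proposition are restricted to $C$, your right-hand column is $H^0(L_1|_{C'})\otimes H^0(L_2|_C)\rightarrow H^0((L_1+L_2)|_{C'})$, so invoking the induction hypothesis additionally needs the restriction $H^0(L_2|_C)\rightarrow H^0(L_2|_{C'})$ to surject, i.e.\ $H^1\left((L_2-C')|_\Gamma\right)=0$; this again is automatic when $d_2>2g-2$ but not under the stated hypotheses, which is one more indication that a Butler-type input, rather than Green's lemma, is what the stated numerics are calibrated for.
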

\begin{proof}
Note that under our assumptions, for each $m\ge 1$, $H^1\paren{(L_1-(m-1)\Gamma)|_{\Gamma}}=0$ and the map
\begin{equation*}
    H^0\paren{(L_1-(m-1)\Gamma)|_{\Gamma}}\otimes H^0(L_2|_{\Gamma})\rightarrow H^0\paren{(L_1+L_2-(m-1)\Gamma)|_{\Gamma}}
\end{equation*}
is surjective by \cite[Proposition 2.2]{Butler94}. So the statement follows by induction on $m$.
\end{proof}

The following is a technical lemma.

\begin{lemma}\label{surjectiveness of multiplication map2}
Let $S$ be an anticanonical rational surface and $L$ be a divisor on $S$ such that $K_S+L$ big and base point free. Let $B'$ be an effective divisor on $S$ with the property that $h^1(B')=0$. Suppose that $(K_S+L)\cdot (B'-2K_S)\ge 5$. Then the natural map
 \begin{equation*}
    H^0(K_S+L+B')\otimes H^0(K_S+L)\rightarrow H^0(2K_S+2L+B')
 \end{equation*}
 is surjective.
\end{lemma}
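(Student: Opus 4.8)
The plan is to establish surjectivity of
\[
H^0(K_S+L+B')\otimes H^0(K_S+L)\rightarrow H^0(2K_S+2L+B')
\]
by applying the Gallego--Purnaprajna Lemma \ref{GP Lemma} to reduce from the surface $S$ to a suitable curve. Concretely, I would set $E:=\sshf{S}(K_S+L+B')$ and take $L$ (in the sense of that lemma) to be the base point free line bundle $\sshf{S}(K_S+L)$, which is base point free by hypothesis. To apply Lemma \ref{GP Lemma} I first need $H^1(\sshf{S})=0$, which holds because $S$ is rational, and the vanishing $H^1\bigl(E\otimes\sshf{S}(-(K_S+L))\bigr)=H^1(B')=0$, which is exactly the hypothesis on $B'$. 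The lemma then reduces the problem to surjectivity on a general member $C\in|K_S+L|$ of the restricted multiplication map
\[
H^0(E|_C)\otimes H^0\bigl((K_S+L)|_C\bigr)\rightarrow H^0\bigl((E\otimes(K_S+L))|_C\bigr).
\]
Since $K_S+L$ is big and base point free, by Bertini a general $C$ is smooth, and I expect it to be irreducible here (or I can handle components separately), so $C$ is a smooth projective curve and I may invoke Green's $H^0$-lemma (Lemma \ref{Green}).

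The key step is then to verify the numerical hypothesis of Green's lemma on $C$. Writing $W:=H^0\bigl((K_S+L)|_C\bigr)$ and $M:=(K_S+L+B')|_C$, Lemma \ref{Green} requires $W$ base point free (inherited from base point freeness of $K_S+L$ on $S$) and the degree condition
\[
h^1\bigl(M\otimes(K_S+L)^{-1}|_C\bigr)=h^1\bigl(B'|_C\bigr)\le \dim W-2.
\]
I would control both sides by Riemann--Roch on $C$ together with the adjunction identity $K_C=(2K_S+L)|_C$. The left-hand side $h^1(B'|_C)=h^0\bigl((K_C-B')|_C\bigr)=h^0\bigl((2K_S+L-B')|_C\bigr)$, and the right-hand side is computed from $\deg\bigl((K_S+L)|_C\bigr)=(K_S+L)^2$ via Riemann--Roch, using that $h^1((K_S+L)|_C)$ is small because $(K_S+L)|_C$ has large degree. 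This is where the hypothesis $(K_S+L)\cdot(B'-2K_S)\ge 5$ enters: that number is precisely $\deg\bigl((B'-2K_S)|_C\bigr)=\deg\bigl((K_S+L)|_C\bigr)-\deg(B'|_C)$ adjusted by the canonical class on $C$, and it is designed to force $\deg(M)$ large enough relative to $\deg K_C$ so that the genus-dependent terms cancel and the inequality $h^1(B'|_C)\le\dim W-2$ holds.

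I expect the main obstacle to be the bookkeeping that turns the surface-level numerical hypothesis $(K_S+L)\cdot(B'-2K_S)\ge 5$ into the precise curve-level inequality demanded by Green's lemma, and in particular pinning down $\dim W=h^0\bigl((K_S+L)|_C\bigr)$ exactly rather than just bounding it. To get $\dim W$ I would use the restriction sequence $0\to\sshf{S}(K_S+L-C)\to\sshf{S}(K_S+L)\to(K_S+L)|_C\to 0$ with $C\in|K_S+L|$, so that $\sshf{S}(K_S+L-C)\iso\sshf{S}$; the relevant vanishing $h^1(\sshf{S})=0$ (rationality) and a Kawamata--Viehweg or Harbourne-style vanishing for $H^1(K_S+L)$ let me identify $\dim W$ from $h^0(K_S+L)$ and $h^0(\sshf{S})=1$. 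A secondary subtlety is ensuring a general $C\in|K_S+L|$ is connected and reduced; should $C$ fail to be irreducible, I would either appeal to Remark \ref{Green Remark} (Green's lemma over a connected reduced curve with invertible dualizing sheaf) or decompose $C$ and argue componentwise. Once the degree inequality is checked on $C$, surjectivity on the curve follows from Green's lemma, and lifting back to $S$ via Lemma \ref{GP Lemma} completes the argument.
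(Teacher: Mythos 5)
Your overall strategy is exactly the paper's: reduce to a general smooth irreducible member $C\in|K_S+L|$ via Lemma \ref{GP Lemma} (using $h^1(\sshf{S})=0$ and the hypothesis $h^1(B')=0$), then apply Green's $H^0$-lemma (Lemma \ref{Green}), so that everything hinges on the inequality $h^1(B'|_C)\le h^0\paren{(K_S+L)|_C}-2$. Up to that point your reduction is correct. (Incidentally, your worry about computing $\dim W$ exactly is unnecessary: the paper only uses the Riemann--Roch lower bound $h^0\paren{(K_S+L)|_C}\ge (K_S+L)^2+1-g(C)=\frac{(K_S+L)\cdot L}{2}$, which needs no vanishing of $H^1(K_S+L)$ --- a vanishing that is in any case not clearly available under the lemma's hypotheses, since $L$ itself is not assumed nef or big.)

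The genuine gap is in your plan to establish the upper bound on $h^1(B'|_C)$ by ``Riemann--Roch together with adjunction.'' Riemann--Roch only computes the difference $h^0-h^1$, so it can never by itself give an upper bound on $h^1(B'|_C)$: by Serre duality $h^1(B'|_C)=h^0\paren{(2K_S+L-B')|_C}$, and Riemann--Roch merely rewrites this as $h^0(B'|_C)+g(C)-1-B'\cdot(K_S+L)$; either way you are left needing an upper bound on the $h^0$ of some line bundle on $C$, which no amount of bookkeeping with Riemann--Roch and adjunction can supply. The paper closes this step with Clifford's theorem, and this is precisely where the effectivity of $B'$ (which your outline never invokes) enters: since $B'$ is effective and $C$ is general, $B'\cdot(K_S+L)\ge 0$, so $\deg\paren{(2K_S+L-B')|_C}\le\deg K_C$ and the divisor is special whenever it has sections, whence Clifford gives $h^0\paren{(2K_S+L-B')|_C}\le\frac{(2K_S+L-B')\cdot(K_S+L)}{2}+1$. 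Combining this with the Riemann--Roch lower bound for $\dim W$ yields $h^0\paren{(K_S+L)|_C}-2-h^1(B'|_C)\ge\frac{(K_S+L)\cdot(B'-2K_S)-6}{2}\ge-\frac{1}{2}$, hence $\ge 0$ by integrality, which is the inequality Green's lemma demands. Without Clifford's theorem (or some substitute bounding $h^0$ of a special divisor), your argument cannot be completed from the stated hypotheses.
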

\begin{proof}
In view of Lemma \ref{GP Lemma} and the assumption that $h^1(B')=0$, it suffices to prove the natural map
\begin{equation}\label{restricted map}
    H^0((K_S+L+B')|_{\Delta})\otimes  H^0((K_S+L)|_{\Delta})\rightarrow  H^0((2K_S+2L+B')|_{\Delta})
\end{equation}
is surjective, where $\Delta\in |K_S+L|$. Moreover $\Delta$ can be taken to be irreducible and smooth, because $K_S+L$ is big and base point free.

By Lemma \ref{Green}, the desired surjectivity of (\ref{restricted map}) will follow once the inequality
\begin{equation}\label{inequality2}
    h^1(B'|_{\Delta})\le h^0((K_S+L)|_{\Delta})-2
\end{equation}
is established.

We now analyze the involved quantities above. By adjunction,
\begin{equation*}
  \text{deg}K_{\Delta} = (\Delta+K_S)\cdot\Delta =(2K_S+L)\cdot(K_S+L).
\end{equation*}
From Riemann-Roch theorem, we have
\begin{eqnarray*}
    h^0((K_S+L)|_{\Delta})  &\ge &(K_S+L)^2+1-g(\Delta)\\
                            &=&(K_S+L)^2+1-\frac{(2K_S+L)\cdot(K_S+L)+2}{2}\\
                            &=& \frac{(K_S+L)\cdot L}{2}.
\end{eqnarray*}
On the other hand, by Serre duality, $h^1(B'|_{\Delta})=h^0((\Delta+K_S-B')|_{\Delta})$. Because $B'$ is effective, we have $B'\cdot(K_S+L)\ge 0$, and therefore
\begin{equation*}
   \deg{(\Delta+K_S-B')|_{\Delta}}=(2K_S+L-B')\cdot(K_S+L)\le\text{deg}{K_{\Delta}}.
\end{equation*}
It then follows from Clifford's theorem that
\begin{equation*}
    h^0((\Delta+K_S-B')|_{\Delta}))\le \frac{(2K_S+L-B')\cdot(K_S+L)}{2}+1.
\end{equation*}
Finally, we find
\begin{eqnarray*}
&&h^0((K_S+L)|_{\Delta})-2-h^1(B'|_{\Delta})\\
   &\ge & \frac{(K_S+L)\cdot L}{2}-2-\paren{\frac{(2K_S+L-B')\cdot(K_S+L)}{2}+1}\\
   &=&\frac{(K_S+L)\cdot (B'-2K_S)-6}{2}\\
   &\ge& -\frac{1}{2}.
\end{eqnarray*}
Thus we have established (\ref{inequality2}) and the proof is complete.
\end{proof}


We are now ready to prove the main theorem. The proof proceeds by cases depending on $|-K_S|$.

\begin{proof}[Proof of Theorem \ref{main result}]
One shall show the surjectivity of the natural map
\begin{equation}\label{surjectivity}
    H^0(r(K_X+\pi^*L))\otimes H^0(K_X+\pi^*L)\rightarrow  H^0((r+1)(K_X+\pi^*L))
\end{equation}
for all $r\ge 1$.

Note to begin with that $K_X+\pi^*L=\pi^*(K_S+B+L)$ is ample and base point free by Lemma \ref{positivity of K+B+L} (1) and the finiteness of $\pi$. On the other hand, for $r\ge 2$ and $i=1, 2$, we have
\begin{equation*}
   H^i((r-i)(K_X+\pi^*L))\iso H^i((r-i)(K_S+B+L))\oplus H^i((r-i)(K_S+L))=0
\end{equation*}
by Lemma \ref{positivity of K+B+L} (1), Kodaira vanishing and the fact that $H^2(\sshf{S})=0$. Therefore when $r\ge 2$, the surjectivity of (\ref{surjectivity}) directly follows from Castelnuovo-Mumford regularity property. It remains to treat the case $r=1$.

By pushing down to $S$, it is reduced to the surjectivity of
\begin{equation*}
    H^0(K_S+B+L)\otimes  H^0(K_S+B+L)\xrightarrow{f_1}  H^0(2K_S+2B+2L),
\end{equation*}
and
\begin{equation*}
   H^0(K_S+B+L)\otimes H^0(K_S+L)\xrightarrow{f_2} H^0(2K_S+B+2L).
\end{equation*}
By Lemma \ref{positivity of K+B+L}(2), $f_1$ is surjective.

For $f_2$, we will take a curve $C_0$ on $S$ (to be specified later) making rows in the commutative diagram exact
\begin{equation}\label{commutative diagram}
\includegraphics[trim=50mm 213mm 40mm 43mm, clip]{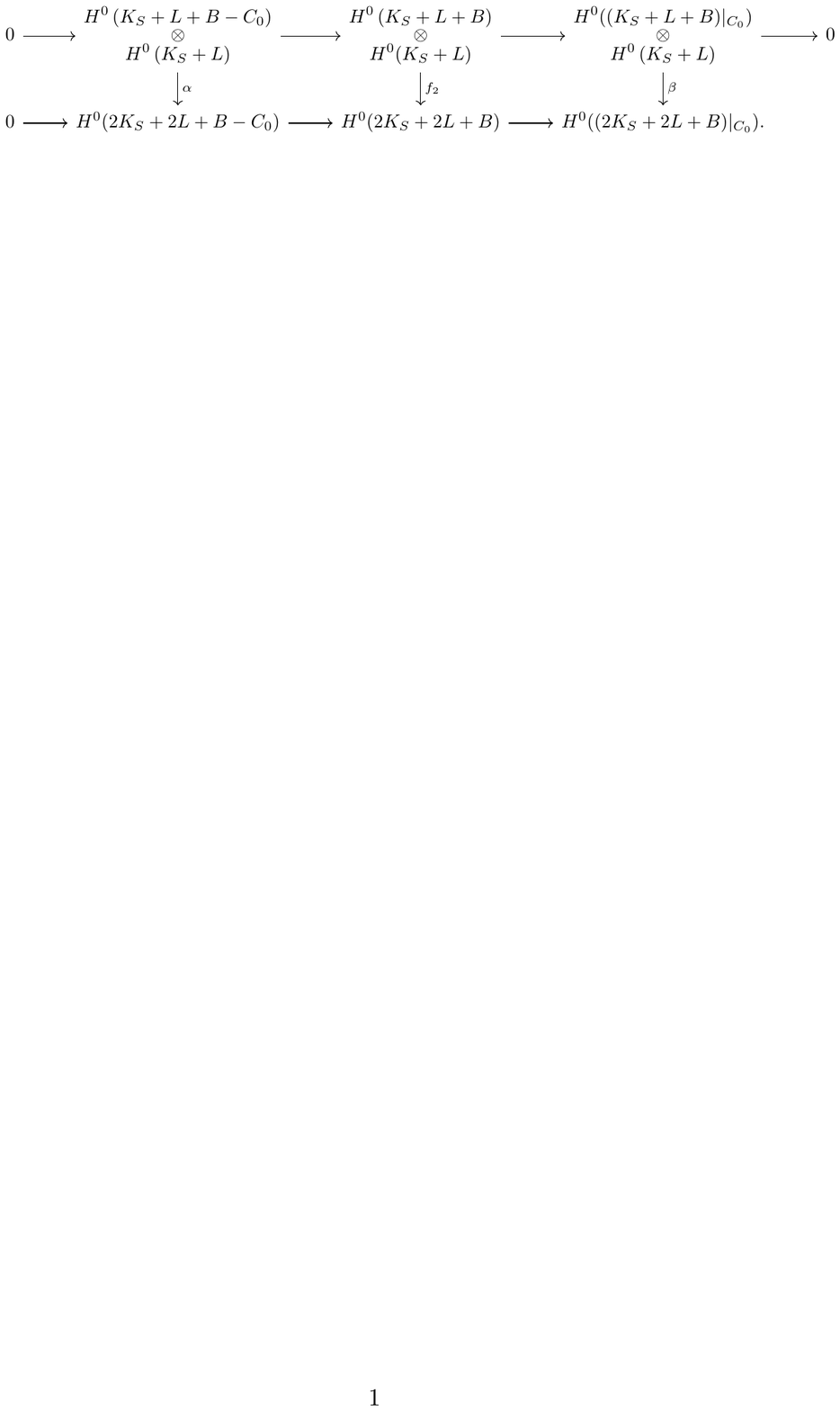}
\end{equation}
Then by the snake lemma, the surjectivity of $f_2$ will follow from the surjectivity of the column maps $\alpha$ and $\beta$.

If $K_S+L=0$, then $f_2$ is obviously surjective, so we may assume that $K_S+L\neq 0$. Thus $K_S+L$ is ample and base point free by Proposition \ref{base point freeness of K+L}. There are three cases:
\vspace{0.3cm}

Case I: $-K_S$ has the fixed part.

In this case we can take $C_0=F$, the fixed part of $|-K_S|$, and put $B'=B-C_0$. It is evident that $B'=(B+K_S)+M$ is nef. By Kodaira vanishing, $H^1(K_S+L+B')=0$, and hence the top row of commutative diagram (\ref{commutative diagram}) is exact. Because $|-K_S|$ has the fixed part, Corollary \ref{vanishing of H^1} implies that $H^1(B')=0$. Moreover by Proposition \ref{inequality1}, $(K_S+L)\cdot(-K_S)\ge 3$, so we can apply Lemma \ref{surjectiveness of multiplication map2} to deduce that $\alpha$ is surjective.

For the surjectivity of $\beta$, we will apply Lemma \ref{surjectiveness of multiplication map}. Since $C_0=F$ does not move on $S$, by Proposition \ref{H^1 of structure sheaf}, $h^1(\sshf{C_0})=0$; and it is evident that for each component $\Gamma_i$ of $C_0$, $-K_S-\Gamma_i$ is effective. Thus we can apply Lemma \ref{surjectiveness of multiplication map} by setting $L_1=K_S+L+B$ and $L_2=K_S+L$.
\vspace{0.3cm}

Case II: $-K_S$ has no fixed part and $K^2_S>0$.

In this case, we can deal with $f_2$ directly, i.e.~ take $C_0=\emptyset$. Since $-K_S$ is big and nef, $B$ is big and nef too. In particular by \cite[Theorem 8]{Harbourne96} we have that $H^1(B)=0$. By virtue of Proposition \ref{inequality1}, $(K_S+L)\cdot(-K_S)\ge 2$. Since $0\neq B$ is effective, it follows that $(K_S+L)\cdot (B-2K_S)\ge 5$, and so Lemma \ref{surjectiveness of multiplication map2} applies.
\vspace{0.3cm}

Case III: $-K_S$ has no fixed part and $K^2_S=0$.

In this case, $-K_S$ is nef, and hence base point free by \cite[Theorem III.1(c)]{Harbourne97}. Similarly $B$ is base point free. As in Case II, if $H^1(B)=0$, we are done by Lemma \ref{surjectiveness of multiplication map2}. Therefore we can assume that $H^1(B)\neq 0$, so $B\cdot (-K_S)=0$ by \cite[Theorem III.1]{Harbourne97}. But the Hodge index theorem implies that $B=m(-K_S)$ for some integer $m>0$.

Set $B_k:=k(-K_S)$. We will use commutative diagram (\ref{commutative diagram}) by taking a smooth $C_0\in |-K_S|$ and do induction on $k$ to show that for all $k\ge 0$, the map
\begin{equation*}
    \alpha_k: H^0(K_S+L+B_k)\otimes H^0(K_S+L)\rightarrow H^0(2K_S+2L+B_k)
\end{equation*}
is surjective.

When $k=0$, this is true because of \cite[Theorem 1.3]{GalPurna01} and the fact that $(K_S+L)\cdot (-K_S)\ge 3$. Suppose now that $\alpha_k$ is surjective for some $k\ge 0$. Since $H^1(K_S+L+B_k)=0$, the top row of diagram (\ref{commutative diagram}) is exact, therefore the surjectivity of $\alpha_{k+1}$ follows immediately once we have shown that
\begin{equation*}
    \beta_{k+1}: H^0((K_S+L+B_{k+1})|_{C_0})\otimes H^0(K_S+L)\rightarrow H^0((2K_S+2L+B_{k+1})|_{C_0})
\end{equation*}
is surjective.

To this end, note that
\begin{equation*}
    H^1(K_S+L-C_0)=H^1(K_S+(K_S+L))=0.
\end{equation*}
Putting $C_0=\sum_i \Gamma_i$, where $\Gamma_i$ are the smooth components, we see that for each $i$, $\Gamma^2_i=0$ and hence that $\Gamma_i\cdot (-K_S)=0$, which imply that $\Gamma_i$ is an elliptic curve. So it suffices to show that
\begin{equation*}
    H^0((K_S+L+B_{k+1})|_{\Gamma_i})\otimes H^0((K_S+L)|_{\Gamma_i})\rightarrow H^0((2K_S+2L+B_{k+1})|_{\Gamma_i})
\end{equation*}
is surjective for each $i$. This is elementary, because
\begin{equation*}
    \deg\paren{(K_S+L+B_{k+1})|_{\Gamma_i}}=\deg\paren{(K_S+L)|_{\Gamma_i}}=L\cdot\Gamma_i\ge 3=2g(\Gamma_i)+1.
\end{equation*}
This completes the proof of Case III and hence that of the theorem.
\end{proof}

We now turn to Horikawa surfaces.

\begin{corollary}
Let $X$ be a Horikawa surface and $f: X\rightarrow S$ be the canonical map (cf.~Section 2). Then for any ample divisor $A$ on $S$ and $r\ge 3$, $K_X+rf^*A$ is base point free and the image of $X$ under $|K_X+rf^*A|$ is projectively normal.
\end{corollary}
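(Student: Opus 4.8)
The plan is to derive this corollary directly from the already-established Theorem \ref{main result} by producing, for a Horikawa surface $X$, an appropriate factorization $f = \pi \circ \mu$ through a ramified double covering $\pi \colon X' \to S$ over an anticanonical rational surface, and then checking that the hypotheses of Theorem \ref{main result} are met with $L = rA$. First I would invoke the structure theory recalled in Section 2: by Theorem \ref{classificaton of Horikawa surfaces}, $X$ is the minimal resolution of a normal double cover of a rational surface $W$ (either $\mathbb{P}^2$ or some $\mathbb{F}_e$), and by the natural maps (\ref{natural maps}) the canonical map factors as $X \xrightarrow{\mu} X' \xrightarrow{\pi} S$, where $S$ is $W$ or its blow-up at the singular point, $\pi$ is a ramified double cover, and $X'$ is minimal. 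Since every surface appearing here is $\mathbb{P}^2$, an $\mathbb{F}_e$, or a single blow-up thereof, we have $\dim|-K_S|\ge 1$, so $S$ is anticanonical of the required type.

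The core step is to reduce the statement about $K_X + r f^*A$ on $X$ to the statement about $K_{X'} + r\pi^*A$ on $X'$, to which Theorem \ref{main result} applies. Here I would use that $\mu \colon X \to X'$ is the minimal resolution of rational double points (the singularities of a Horikawa surface are rational, since the branch locus has no infinitely near triple points), so that $\mu_*\sshf{X} = \sshf{X'}$, $R^i\mu_*\sshf{X} = 0$ for $i>0$, and crucially $\mu^* K_{X'} = K_X$ (the resolution is crepant for rational double points). Consequently $K_X + rf^*A = \mu^*(K_{X'} + r\pi^*A)$, and by the projection formula together with $\mu_*\sshf{X}=\sshf{X'}$ we get $H^0(X, k(K_X+rf^*A)) \cong H^0(X', k(K_{X'}+r\pi^*A))$ for all $k$. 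Base point freeness of $K_X+rf^*A$ then follows from base point freeness of $K_{X'}+r\pi^*A$ (pulling back a globally generated system under $\mu$ keeps it base point free since $\mu^*$ of the evaluation map stays surjective), and the surjectivity of the symmetric-power multiplication maps transfers verbatim through these isomorphisms. Thus projective normality of the image of $X'$ forces that of the image of $X$.

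Finally I would verify the numerical hypotheses of Theorem \ref{main result} with $L = rA$, where $A$ is ample on $S$ and $r \ge 3$. Since $A$ is ample on a projective surface, $A \cdot C \ge 1$ for every curve $C$, whence $L \cdot C = r(A\cdot C) \ge 3$. By Proposition \ref{base point freeness of K+L} (equivalently by the remark in the introduction that $L\cdot C \ge 3$ together with $L^2 \ge 7$ suffices), I must confirm $K_S + L$ is nef; for $r \ge 3$ and $A$ ample one has $L^2 = r^2 A^2 \ge 9 A^2 \ge 9 \ge 7$, and the argument of Proposition \ref{base point freeness of K+L} gives nefness of $K_S+L$ from $L^2 \ge 7$. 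With these hypotheses in hand, Theorem \ref{main result} yields that $K_{X'} + r\pi^*L' = K_{X'}+\pi^*L$ — more precisely $K_{X'}+\pi^*(rA)$ — is base point free with all multiplication maps (\ref{multiplicaion map 2}) surjective, hence normally generated; by the previous paragraph the same holds for $K_X + rf^*A$.

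The main obstacle I anticipate is the transfer step through the resolution $\mu$, specifically the clean identity $\mu^*K_{X'} = K_X$ and the cohomological comparison. This hinges on the singularities of the Horikawa model $X'$ being rational double points (Du Val / canonical singularities), which is exactly where the final clause of Theorem \ref{classificaton of Horikawa surfaces} — that the branch locus has no infinitely near triple points — is indispensable: it guarantees the double cover $X'$ has at worst rational double points, so that the minimal resolution is crepant and cohomologically invisible. I would need to state this reduction carefully, perhaps citing \cite{Horikawa76} or \cite{BPV} for the singularity classification, rather than treat it as completely routine.
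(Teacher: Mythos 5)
Your proposal is correct and takes essentially the same approach as the paper's proof: reduce to the normal double cover $X'$ via the crepant identity $K_X=\mu^*K_{X'}$ (valid because $X'$ is normal, Gorenstein, with canonical singularities, by Horikawa's Lemma 1.3), transfer sections through the projection formula, and apply Theorem \ref{main result} with $L=rA$. The paper states this more tersely, but the details you supply (the rational double point justification and the verification that $L\cdot C\ge 3$ and $K_S+L$ is nef) are exactly the implicit content of its argument.
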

\begin{proof}
In (\ref{natural maps}), $X'$ is normal, Gorenstein and has canonical singularities, see \cite[Lemma 1.3]{Horikawa76}.
Therefore $K_X=\mu^*K_{X'}$, as $K_X$ is $\mu$-nef. It follows from the projection formula that $H^0(r(K_X+f^*L))\iso H^0(r(K_{X'}+\pi^*L))$ for any $r\ge 1$ and divisor $L$. For $r\ge 3$, since $S$ is an anticanonical rational surface, we apply Theorem \ref{main result} to $X'$, deducing the projective normality of $K_{X'}+r\pi^*A$ and hence that of $K_X+rf^*A$.
\end{proof}

\bibliography{Ontheprojectivenormalityofdoublecoveringsoverarationalsurface}{}
\bibliographystyle{alpha}

\end{document}